\def\NZQ{\Bbb}               
\def\NN{{\NZQ N}}
\def\ZZ{{\NZQ Z}}
\def\FFF{{\NZQ F}}
\def\frk{\frak}               
\def\mm{{\frk m}}
\def\opn#1#2{\def#1{\operatorname{#2}}} 
\opn\chara{char} \opn\length{\ell} \opn\pd{pd} \opn\rk{rk}
\opn\projdim{proj\,dim} \opn\injdim{inj\,dim} \opn\rank{rank}
\opn\depth{depth} \opn\codepth{codepth} \opn\grade{grade} \opn\height{height}
\opn\embdim{emb\,dim} \opn\codim{codim}
\opn\Tr{Tr} \opn\bigrank{big\,rank}
\opn\superheight{superheight}\opn\lcm{lcm}
\opn\trdeg{tr\,deg}%
\opn\reg{reg} \opn\lreg{lreg} \opn\skel{skel}
\opn\div{div} \opn\Div{Div} \opn\cl{cl} \opn\Cl{Cl}
\opn\Spec{Spec} \opn\Supp{Supp} \opn\supp{supp} \opn\Sing{Sing}
\opn\Ass{Ass}
\opn\Ann{Ann} \opn\Rad{Rad} \opn\Soc{Soc}    \opn\Fitt{Fitt}
\opn\Sym{Sym} \opn\Ker{Ker} \opn\Coker{Coker} \opn\Im{Im}
\opn\Hom{Hom} \opn\Tor{Tor} \opn\Ext{Ext} \opn\End{End}
\opn\Aut{Aut} \opn\id{id} \opn\ini{in}
\opn\nat{nat}\opn\it{it}
\opn\pff{proof}
\opn\Pf{proof} \opn\GL{GL} \opn\SL{SL} \opn\mod{mod} \opn\ord{ord} \opn\H{H}
\opn\Hilb{Hilb}
\opn\aff{aff} \opn\con{conv} \opn\relint{relint} \opn\st{st}
\opn\lk{lk} \opn\cn{cn} \opn\core{core} \opn\vol{vol}
\opn\link{link} \opn\star{star} \opn\skel{skel}\opn\Kd{Krull-dim}
\opn\gr{gr}
\def\pot#1#2{#1[\kern-0.28ex[#2]\kern-0.28ex]}
\opn\dirlim{\underrightarrow{\lim}}
\opn\inivlim{\underleftarrow{\lim}}
\let\tensor=\otimes
\let\iso=\cong
\let\Dirsum=\bigoplus
\let\to=\rightarrow
\def\Implies{\ifmmode\Longrightarrow \else
     \unskip${}\Longrightarrow{}$\ignorespaces\fi}
\def\implies{\ifmmode\Rightarrow \else
     \unskip${}\Rightarrow{}$\ignorespaces\fi}
\def\iff{\ifmmode\Longleftrightarrow \else
     \unskip${}\Longleftrightarrow{}$\ignorespaces\fi}
\newtheorem{Theorem}{Theorem}[section]
\newtheorem{Lemma}[Theorem]{Lemma}
\newtheorem{Corollary}[Theorem]{Corollary}
\newtheorem{Proposition}[Theorem]{Proposition}
\let\epsilon\varepsilon
\let\phi=\varphi
\let\kappa=\varkappa
\def\qed{\ifhmode\textqed\fi
   \ifmmode\ifinner\quad\qedsymbol\else\dispqed\fi\fi}
\def\textqed{\unskip\nobreak\penalty50
    \hskip2em\hbox{}\nobreak\hfil\qedsymbol
    \parfillskip=0pt \finalhyphendemerits=0}
\def\dispqed{\rlap{\qquad\qedsymbol}}
\opn\ini{in} \opn\inim{inm} \opn\rate{rate}
\opn\codim{codim}
\begin{document}

\title{Bounds for  the regularity of local cohomology of bigraded modules}
\author{J\"urgen Herzog and Ahad Rahimi}

\subjclass[2000]{  13D45, 13D02, 16W50.  This paper was written during the visit of the second author at Universit\"at Duisburg-
Essen, Campus Essen. He is grateful for its hospitality.    The second author was in
part supported by a grant from IPM (No. 91130029)}

\address{J\"urgen Herzog, Fachbereich Mathematik, Universit\"at Duisburg-Essen, Campus Essen, 45117
Essen, Germany} \email{juergen.herzog@uni-essen.de}
\address{ Ahad Rahimi, Department of Mathematics, Razi University, Kermanshah,
 Iran and
 School of Mathematics, Institute for Research in Fundamental Sciences
(IPM), P. O. Box: 19395-5746, Tehran, Iran.
}\email{ahad.rahimi@razi.ac.ir}

\begin{abstract}
Let $M$ be a finitely generated bigraded module over the standard bigraded polynomial ring $S=K[x_1,\ldots,x_m, y_1,\ldots,y_n]$, and let $Q=(y_1,\ldots,y_n)$. The local cohomology modules $H^k_Q(M)$ are naturally bigraded, and the components $H^k_Q(M)_j=\Dirsum_iH^k_Q(M)_{(i,j)}$ are finitely generated graded $K[x_1,\ldots,x_m]$-modules. In this paper we study the regularity of $H^k_Q(M)_j$, and show in several cases that $\reg H^k_Q(M)_j$ is linearly bounded as a function of $j$.
\end{abstract}

\maketitle

\section*{Introduction}

In this paper  we  study the regularity of local cohomology of bigraded modules.  Let $K$ be a field and $S=K[x_1,\ldots,x_m,y_1,\ldots,y_n]$ be the polynomial in the variables $x_1,\ldots,x_m,y_1,\ldots,y_n$. We consider $S$ to be a standard bigraded $K$-algebra with $\deg x_i=(1,0)$ and $\deg y_j=(0,1)$ for all $i$ and $j$. Let $I\subset S$ be a bigraded ideal. Then $R=S/I$ is again a standard bigraded $K$-algebra. Let $M$ be a finitely generated bigraded $R$-module. We consider the local cohomology modules  $H^k_Q(M)$ with respect to $Q=(y_1,\ldots,y_n)$.  This module has a natural bigraded $S$-module structure. For all integers $j$ we set
\[
H^k_Q(M)_j=\Dirsum_{i}H^k_Q(M)_{(i,j)}.
\]
Notice that $H^k_Q(M)_j$ is a finitely generated graded $S_0$-module, where $S_0$ is the polynomial ring $K[x_1,\ldots,x_m]$.

The main purpose of this paper is to study the regularity of the $S_0$-modules  $H^k_Q(M)_j$ as a function of $j$. In all known cases,  $\reg H^k_Q(M)_j$ is bounded above (or equal to) a linear function $aj+b$ for suitable integers $a$ and $b$ with $a\leq 0$, see \cite{AR1}. Various cohomological conditions on $M$ are known that guarantee that for suitable $k$ the regularity of  $H^k_Q(M)_j$ as a function of $j$ is  actually bounded,  see the papers \cite{BRR}, \cite{AR2} and \cite{AR3}. For example, in \cite[Corollary 2.8]{AR3} it is shown that if $M$ is sequentially Cohen--Macaulay with respect to $Q$, then  there exists a number  $c$ such that  $|\reg H^k_Q(M)_j|\leq c$ for all $k$ and $j$. In general however one can expect only linear bounds. As shown in \cite[Theorem 5.3 and Corollary 5.4]{AR1} the  regularity of the local cohomology modules $H_Q^k(R)_j$ is linearly bounded, if $R=S/(f)$ is a hypersurface ring for which the content ideal $c(f)\subset S_0$ is $\mm_0$-primary, where $\mm_0=(x_1,\ldots,x_m)$ is the graded maximal ideal of $S_0$. Here, for a  bihomogeneous polynomial $f=\sum_{|\beta|=b} f_\beta y^\beta$, the ideal  $c(f)$ is the ideal in $S_0$ defined by the polynomials $f_\beta$. If, more generally,  $R=S/I$, where $I$ is a bihomogeneous ideal, one defines the content ideal $c(I)$ of $I$ to be the ideal generated in $S_0$ by the polynomials $c(f)$  with $f$ in $I$. As the main result of  Section~\ref{anni} we show in Theorem~\ref{Anihilate} that there exists a linear function $\ell\: \ZZ_-\to \ZZ_+$ such that $c(I)^{\ell(j)}H_Q^n(R)_j=0$ for all $j$.  This result is then used to generalize the above quoted theorem about hypersurface rings and to obtain that $\reg H_Q^n(R)_j$ is linearly bounded, provided $c(I)$ is  $\mm_0$-primary, see Corollary~\ref{primary}. Our Theorem~\ref{top} generalizes \cite[Corollary 5.4]{AR1} in a different direction.  It is shown in this theorem that the regularity of  $H_Q^n(R)_j$  is linearly bounded provided that $\dim S_0/c(I)\leq 1$. It is an open question whether the condition on the dimension of $S_0/c(I)$ can be dropped in this statement.

Some more explicit results  concerning the regularity of  $H_Q^n(R)_j$, in the case that $R=S/(f)$ is a hypersurface ring,  are obtained in Section~\ref{anni}. In Proposition~\ref{regular} we show that $\reg H_Q^n(R)_j$ is a linear function if $f=\sum_{i=1}^nf_iy_i$ where $f_1,\ldots, f_n$ is a regular sequence, and in Corollary~\ref{twosummands} it is shown that $\reg H_Q^n(R)_j$ is a linear function if  $f=f_1y_1+f_2y_2$ and $\deg \gcd(f_1,f_2)<\deg f_i$.

Unfortunately, the methods used  in Section 1 to bound the regularity can be used only for the top local cohomology $H^n_Q(R)$ and only when the content ideal of the defining ideal of $R$ is $\mm_0$-primary, or in the hypersurface case an ideal of height $m-1$. The situation is much better when we consider local cohomology  of multigraded $S$-modules. Indeed, in Section~2 of the paper it is shown in Theorem~\ref{againinessen} that if  $M$ is a finitely generated  $\ZZ^m\times \ZZ^n$-graded $S$-module. Then there exists an integer $c$, which only depends on the $x$-shifts of the bigraded resolution of $M$,  such that
$|\reg H^k_Q(M)_j|\leq c$  for all $k$ and all $j$. As mentioned above such a bound also exists when $M$ is only bigraded, but sequentially Cohen-Macaulay with respect to $Q$. The more it is surprising that in the multigraded case, no other cohomological condition on $M$ is required to obtain such a global bound. The proof of Theorem~\ref{againinessen} uses essentially a result of  Bruns and the first author  \cite[Theorem 3.1]{BH1} which says that the multigraded shifts in the resolution of a multigraded $S$-module $M$ can be bounded in terms of the multigraded degrees of the generators of the first relation module of $M$.

\section{On the annihilation of the graded components of top local cohomology with applications to regularity bounds}
\label{anni}

Let $I=(f_1,\ldots,f_r)$ where the $f_i$ are bihomogeneous polynomials. Then the {\em content ideal} $c(I)$ of $I$ is defined to be the ideal $c(f_1)+\cdots  +c(f_r) \subset S_0$ where for  bihomogeneous polynomial $f=\sum_{|\beta|=b} f_\beta y^\beta$ the ideal  $c(f)$ is the ideal in $S_0$ defined by the polynomials $f_\beta$. Obviously, the definition of $c(I)$ does not depend on the chosen set of generators of $I$.  In this section we show that a certain power  of the content ideal of $I$  annihilates $H^n_Q(R)_j$, and use this fact to bound the regularity of  $H^n_Q(R)_j$ in some cases.


\begin{Theorem}
\label{Anihilate}
Let $R=S/I$ where $I$ is a bigraded ideal  in $S$. Then there exists a linear function $\ell\: \ZZ_-\to \ZZ_+$ such that $c(I)^{\ell(j)}H^n_Q(R)_j=0$ for all $j\leq 0$.
\end{Theorem}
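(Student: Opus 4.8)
The plan is to compute $H^n_Q(R)$ completely explicitly and then read off the annihilator of each graded component. Since $Q$ is generated by the $n$ elements $y_1,\dots,y_n$, the top local cohomology $H^n_Q(-)$ is the cokernel at the end of the \v{C}ech complex; it is therefore right exact and additive, and since it agrees with $-\tensor_S H^n_Q(S)$ on free modules one gets $H^n_Q(R)\iso H^n_Q(S)/IH^n_Q(S)$. Here $H^n_Q(S)$ is the module of inverse polynomials: it has $K$-basis the monomials $x^a y^{-c}$ with $a\in\NN^m$ and $c\in\ZZ^n$, $c\geq\mathbf 1:=(1,\dots,1)$ componentwise, with $S_0=K[x_1,\dots,x_m]$ acting on the $x$'s and $y_k\cdot y^{-c}=y^{-(c-e_k)}$ if $c_k\geq 2$ and $y_k\cdot y^{-c}=0$ if $c_k=1$. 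Thus $H^n_Q(S)_j=\Dirsum_{|c|=-j,\,c\geq\mathbf 1}S_0\,y^{-c}$ is a free $S_0$-module, and $H^n_Q(R)_j$ is its quotient by the classes of all $f\cdot y^{-c'}$ landing in degree $j$. Writing a bihomogeneous $f=\sum_{|\gamma|=b}f_\gamma y^\gamma\in I$, the defining relations of $H^n_Q(R)_j$ are $\sum_{\gamma\leq c-\mathbf 1}f_\gamma[y^{-(c-\gamma)}]=0$ for all $c\geq\mathbf 1$ (the condition $\gamma\leq c-\mathbf 1$ being exactly what keeps $y^{-(c-\gamma)}$ nonzero).

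From these relations I extract two facts. First, the \emph{corner}: for any $\gamma\in\supp f$ take $c=\gamma+\mathbf 1$; then $c-\mathbf 1=\gamma$, and since distinct exponents of equal total degree are incomparable under the componentwise order, the only surviving term is $\gamma'=\gamma$, giving $f_\gamma[y^{-\mathbf 1}]=0$. Hence $c(I)$ already annihilates the corner class in degree $-n$. Second, a \emph{leading-term reduction}: fix a term order and let $\gamma^*$ be the largest exponent of $f$; taking $c=d+\gamma^*$ yields
$f_{\gamma^*}[y^{-d}]=-\sum_{\gamma<\gamma^*}f_\gamma[y^{-(d+\gamma^*-\gamma)}]$,
the sum being over those $\gamma$ with $d+\gamma^*-\gamma\geq\mathbf 1$, and every exponent occurring on the right is strictly larger than $d$ in the term order.

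Iterating this reduction, $f_{\gamma^*}^{t+1}[y^{-d}]=0$ as soon as $t$ exceeds the length of the longest chain of such steps issuing from $d$, because $f_{\gamma^*}$ commutes with all $f_\gamma\in S_0$ and the chain terminates at the order-maximal exponent (where the right-hand sum is empty). The crucial point, which keeps the bound \emph{linear} rather than merely polynomial, is that each step moves the exponent by $\gamma^*-\gamma$, a vector of bounded size (at most $b$) and zero coordinate sum. Using the weight $\Phi(d)=\sum_i M^{n-i}d_i$ with $M$ chosen large relative to $b$, one checks that every admissible step increases $\Phi$ by a fixed positive amount, while $\Phi$ is bounded above by $M^{n-1}(-j)$ on the finite set of exponents of degree $j$; hence the chains have length linear in $-j$, and $f_{\gamma^*}^{L}$ annihilates $H^n_Q(R)_j$ with $L=L(j)$ linear. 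Letting the term order vary realizes every vertex of the Newton polytope of $f$ as such a leading coefficient.

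It remains to treat the non-leading (interior) coefficients and to assemble a single linear function. For the interior coefficients I would argue by induction on $n$: the boundary components, where some $d_k=1$, reduce to $n-1$ variables through the identification $H^n_Q(R)\iso H^1_{(y_n)}\bigl(H^{n-1}_{Q'}(R)\bigr)$ with $Q'=(y_1,\dots,y_{n-1})$ coming from the top corner of the Grothendieck spectral sequence, while an arbitrary interior class is pushed to the boundary by the same bounded-step reduction; the base case $n=1$ gives annihilation by $c(I)$ already with power $1$. Once every generator $g$ of $c(I)$ is shown to satisfy $g^{L_0}H^n_Q(R)_j=0$ with $L_0$ linear in $-j$, a pigeonhole count on products of the finitely many generators of $c(I)$ produces $c(I)^{\ell(j)}H^n_Q(R)_j=0$ with $\ell\colon\ZZ_-\to\ZZ_+$ linear. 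I expect the main obstacle to be precisely the interior coefficients: they are not leading for any term order, so the clean weight/chain estimate does not apply to them directly, and controlling their nilpotency \emph{uniformly} (linearly in $-j$, and compatibly across the inductive reduction on $n$) is the technical heart of the argument; the bounded-step potential bound of the previous paragraph is what guarantees that all the resulting estimates stay linear.
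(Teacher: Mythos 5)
Your reduction to $H^n_Q(R)\iso H^n_Q(S)/IH^n_Q(S)$ and the description by inverse polynomials are correct, and the corner observation and the weight--function estimate for a leading coefficient are plausible. But the argument has a genuine gap, and it is exactly the one you flag yourself: the term-order reduction only applies to coefficients $f_{\gamma^*}$ where $\gamma^*$ is extremal for some term order, i.e.\ a vertex of the Newton polytope of $f$. The content ideal $c(f)$ is generated by \emph{all} the coefficients $f_\gamma$, and for an interior $\gamma$ the relation obtained from $c=d+\gamma$ expresses $f_\gamma[y^{-d}]$ in terms of classes $[y^{-(d+\gamma-\gamma')}]$ that move in \emph{both} directions with respect to any weight, so no monotone potential terminates the iteration. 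The proposed rescue --- induction on $n$ via $H^n_Q\iso H^1_{(y_n)}(H^{n-1}_{Q'}(-))$ --- is only a sketch: it is not explained how an interior class is ``pushed to the boundary,'' nor why the resulting bounds remain linear and uniform across the induction. Since the theorem is precisely the assertion that \emph{every} generator of $c(I)$ becomes nilpotent on $H^n_Q(R)_j$ with linearly controlled exponent, the unproved interior case is not a technical remainder but the substance of the statement; as written, the proof is incomplete.

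It is worth comparing with how the paper sidesteps this difficulty entirely. For each generator $f_i=\sum_{|\beta|=b_i}f_{i,\beta}y^\beta$ of $I$ one passes to the \emph{generic} hypersurface $g_i=\sum_\beta x_{i,\beta}y^\beta$ over the polynomial ring $T_i=K[\{x_{i,\beta}\}]$ and invokes \cite[Proposition 5.2]{AR1}, which gives a linear function $l_i(j)$ with $[H^n_Q(P_i/(g_i))_j]_{l_i(j)}=0$. Because that module is generated in degree $0$, the whole maximal ideal satisfies $\mm_i^{l_i(j)}H^n_Q(P_i/(g_i))_j=0$ --- so all coefficient variables, ``vertex'' and ``interior'' alike, are handled simultaneously by a degree count rather than by a combinatorial reduction. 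The base change $x_{i,\beta}\mapsto f_{i,\beta}$ (a right-exactness/tensor argument) then yields $c(f_i)^{l_i(j)}H^n_Q(S/(f_i))_j=0$, the surjection $H^n_Q(S/(f_i))_j\to H^n_Q(R)_j$ transfers this to $R$, and summing the $l_i(j)$ gives $\ell(j)$. If you want to complete your approach you would either need a genuinely new argument for the interior coefficients, or you could replace that whole part by this specialization-from-the-generic-case device.
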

\begin{proof} Let $f_1,\ldots,f_r$ be a minimal set of bihomogeneous polynomials generating the ideal $I$, and assume that $f_i$ is homogeneous of bidegree $(-a_{i}, -b_{i})$. Then $R$ has the following free presentation
  \[
 \cdots \longrightarrow \Dirsum_{i=1}^rS(-a_{i}, -b_{i}) \longrightarrow S \longrightarrow R \longrightarrow 0.
 \]
By \cite[Theorem 1.1]{AR1}, the $S_0$-module $H_{Q}^n(R)_j$ has then the following free $S_0$-presentation
\[
 \cdots \longrightarrow \Dirsum_{i=1}^r H^n_Q(S)(-a_{i}, -b_{i})_j \overset {\phi} \longrightarrow  H^n_Q(S)_j  \longrightarrow H^n_Q(R)_j \longrightarrow 0.
 \]
The map  $\phi$ can be described as follows: We may write $f_i=\sum_{|\beta|=b_i}f_{i, \beta}y^\beta$ with $\deg f_{i, \beta}=a_i$ for $i=1, \ldots, r$ and set  $R=K[y_1, \dots, y_n]$.  By Formula (1) in \cite{AR1} we have
\[
G_0: = H^n_Q(S)_j=\Dirsum_{\left|c\right|=-n-j}S_0z^c,
\]
 where $z\in \Hom_K(R_{-n-j}, K)$ is the $K$-linear map with
 \[
\ z^a(y^b)=\left\{
\begin{array}{ll}
z^{a-b}, & \text{if $b\leq a$,}\\
 0, & \text{if  $b\not \leq a$.}
\end{array}
\right.
\]
We set $G_1=\Dirsum_{i=1}^rF_i$ where
\[
  F_i= H^n_Q(S)(-a_{i}, -b_{i})_j= \Dirsum_{\left|c\right|=-n-j+b_i}S_0(-a_{i})z^c.
  \]
  For $G_0$ the basis consists the elements  $z^c$ with  $\left|c\right|=-n-j$ and $G_1$ has a basis consisting of the elements $e_iz^c$ with $\deg e_i=a_i$ and $\left|c\right|=-n-j+b_i$.
We have $\phi(e_iz^c)=f_{i, \beta}z^{c-\beta}$ with $|\beta|=b_i$ if $\beta\leq c$,  and otherwise $0$.
 We set $\phi(F_i)=U_i$ for $i=1, \ldots, r$. Then  $\Im \phi=\sum_{i=1}^rU_{i}$. We set   $T_i=K[\{x_{i, \beta} \}_{\beta \in \NN^n, |\beta|=b_i}]$, $P_i=T_i[y_1, \ldots, y_n]$  and $g_i=\sum_{|\beta|=b_i}x_{i, \beta}y^\beta$ for $i=1, \ldots, r$.
  By \cite[Proposition 5.2]{AR1} there exists a linear function $l_i(j)$ such that
 \[
\big[ H^n_Q\big(P_i/(g_i)\big)_j\big ]_{l_i(j)} =0 \quad\text{for}\quad  i=1, \ldots, r.
 \]
 Since the graded $T_i$-module $H^n_Q(P_i/(g_i))_j$ is generated in degree $0$,  it follows that $\mm_i^{l_i(j)}H^n_Q(P_i/(g_i))_j=0$ where  $\mm_i=(\{x_{i,\beta} \}_{\beta \in \NN^n, |\beta|=b_i})$ for $i=1, \ldots, r$.
    Replacing $x_{i, \beta}$ by $f_{i, \beta}$ we obtain
      \begin{eqnarray}
      \label{formula1}
      c(f_i)^{l_i(j)}H^n_Q(S/(f_i))_j=0 \quad\text{for}\quad  i=1, \ldots, r.
      \end{eqnarray}
Indeed,  in order  to prove (\ref{formula1}),  we consider the map $\pi: T_i\to S_0$ where $x_{i, \beta} \longmapsto f_{i, \beta}$. It follows from the free presentations of $H^n_Q(P_i/(g_i))_j$ and $H^n_Q(S/(f_i))_j$ that
\begin{eqnarray}
\label{pi}
H^n_Q(S/(f_i))_j \iso H^n_Q(P_i/(g_i))_j\tensor_{T_i}S_0.
\end{eqnarray}
Notice that if  $m\in H^n_Q(P_i/(g_i))_j$ and $h\in T_i$, then we have
\[
(hm)\tensor 1=m\tensor \pi(h)=\pi(h)(m\tensor 1).
 \]
 In particular,  if $h\in \mm_i^{l_i(j)}$, then   $\pi(h)(m\tensor 1)=(hm)\tensor 1=0$, and hence by (\ref{pi}) we have  $\pi(h) H^n_Q(S/(f_i))_j =0$ for all $h\in \mm_i^{l_i(j)}$. Therefore $c(f_i)^{l_i(j)}H^n_Q(S/(f_i))_j=0$, because $\pi(\mm_i^{l_i(j)})=c(f_i)^{l_i(j)}$.

    Since  the functor $H^n_Q(-)$ is right exact, the canonical epimorphism $S/(f_i)\to S/I=R$ induces and epimorphism  $H^n_Q(S/(f_i))_j \to H^n_Q(R)_j$ for all $j$. It follows that
      \[
      c(f_i)^{l_i(j)}H^n_Q(R)_j=0 \quad\text{for}\quad  i=1, \ldots, r.
      \]
  We set $\ell(j)=\sum_{i=1}^r{l_i(j)}$. Then $\ell(j)$ is again a linear function of $j$, and
 \[
 c(I)^{\ell(j)}H^n_Q(R)_j=\Big(\sum_{i=1}^rc(f_i)\Big)^{\ell(j)}H^n_Q(R)_j=0,
  \]
  as desired.
\end{proof}

\medskip
Let $N$ be a $\ZZ$-graded $S_0$-module, and
\[
0 \rightarrow F_k \rightarrow \cdots \rightarrow F_1 \rightarrow F_0 \rightarrow N \rightarrow 0,
\]
be  the minimal graded free $S_0$-resolution of $N$ with  $F_i=\Dirsum_ {j=1}^{t_i} S_0(-a_{ij})$ for $i=1,\ldots,k$.
Then the {\em Castelnuovo-Mumford regularity} $\reg N$ of $N$ is defined to be the integer
\[
\reg N= \max_{i,j}\{a_{ij}-i\}.
\]
\begin{Corollary}
\label{primary}
Assume in addition to Theorem \ref{Anihilate} that $c(I)$ is an $\mm_0$-primary ideal where  $\mm_0$ is the graded maximal ideal of $S_0$. Then there exists a linear function $\ell: \ZZ_-\to \ZZ_+$ such that
\[
0\leq \reg H^n_Q(R)_j \leq \ell(j)
\]
for all $j$.
\end{Corollary}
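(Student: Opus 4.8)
The plan is to trap the finitely generated graded $S_0$-module $H^n_Q(R)_j$ between two degrees: it is generated in degree $0$, and it is annihilated by a power of $\mm_0$ that grows only linearly in $j$. Once both facts are in place, the regularity is squeezed into the interval $[0,\ell(j)]$ for a suitable linear $\ell$.

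First I would read off from the free presentation
\[
\Dirsum_{i=1}^r H^n_Q(S)(-a_{i},-b_{i})_j \overset{\phi}\longrightarrow H^n_Q(S)_j \longrightarrow H^n_Q(R)_j \longrightarrow 0
\]
established in the proof of Theorem~\ref{Anihilate} that $H^n_Q(R)_j$ is generated in degree $0$: the module $G_0=H^n_Q(S)_j=\Dirsum_{|c|=-n-j}S_0z^c$ is $S_0$-free on the basis $\{z^c\}$, each of which sits in degree $0$, so the same holds for the quotient. Consequently, whenever $H^n_Q(R)_j\neq 0$ it possesses a minimal generator in degree $0$; since the regularity of a module is at least the top degree of a minimal generator, this already yields the lower bound $\reg H^n_Q(R)_j\geq 0$.

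Next I would exploit the $\mm_0$-primary hypothesis. Because $\sqrt{c(I)}=\mm_0$ and $S_0$ is Noetherian, there is an integer $s$ with $\mm_0^s\subseteq c(I)$. Writing $\ell_0$ for the linear function produced by Theorem~\ref{Anihilate}, so that $c(I)^{\ell_0(j)}H^n_Q(R)_j=0$, I obtain
\[
\mm_0^{s\ell_0(j)}H^n_Q(R)_j \subseteq c(I)^{\ell_0(j)}H^n_Q(R)_j = 0 .
\]
Thus $H^n_Q(R)_j$ is a finitely generated $S_0$-module annihilated by a power of $\mm_0$, hence of finite length; being generated in degree $0$ and killed by $\mm_0^{s\ell_0(j)}$, its graded pieces vanish in every degree $\geq s\ell_0(j)$.

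Finally I would invoke the standard fact that for a graded $S_0$-module $N$ of finite length one has $\reg N=\max\{d:N_d\neq 0\}$ (only $H^0_{\mm_0}(N)=N$ contributes to the local-cohomology description of the regularity). Combined with the degree bound above this gives $\reg H^n_Q(R)_j\leq s\ell_0(j)-1$, and setting $\ell(j)=s\ell_0(j)-1$ — again linear in $j$, and adjusting the constant if needed so that it takes values in $\ZZ_+$ — finishes the argument. The one genuine obstacle is the passage from a power of $c(I)$ to a power of $\mm_0$, which is precisely where the $\mm_0$-primary hypothesis is indispensable: without it $H^n_Q(R)_j$ need not have finite length, and its regularity would no longer be controlled by its top nonzero degree.
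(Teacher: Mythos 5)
Your argument is correct and follows essentially the same route as the paper: the lower bound comes from generation in degree $0$, and the upper bound from combining $\mm_0^s\subseteq c(I)$ with the annihilation statement of Theorem~\ref{Anihilate} to get finite length and hence $\reg H^n_Q(R)_j\leq s\ell_0(j)-1$. The extra details you supply (why generation in degree $0$ gives the lower bound, and why regularity of a finite-length module is its top nonzero degree) are exactly the facts the paper uses implicitly.
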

\begin{proof}
The lower bound for the regularity follows from the fact that $H^n_Q(R)_j $ is generated in degree $0$.
Since $c(I)$ is an $\mm_0$-primary, it follows that $\mm_0^k\subseteq c(I)$ for some $k$ and hence by Theorem \ref{Anihilate}, there exists a linear function $\ell'$ such that
\[
\mm_0^{k{\ell'(j)}}H^n_Q(R)_j=c(I)^{\ell'(j)}H^n_Q(R)_j=0.
\]
Therefore $H^n_Q(R)_j$ is of finite length for all $j$ and $\reg H^n_Q(R)_j \leq k\ell'(j)-1=\ell(j)$.

\end{proof}

\begin{Corollary}
\label{dim1}
With the assumptions and the notation of Theorem \ref{Anihilate} we have
\[
\dim _{S_0}H^n_Q(R)_j= \dim S_0/c(I) \quad\text{for all}\quad j.
\]
\end{Corollary}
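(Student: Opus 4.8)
The plan is to reduce the asserted equality of dimensions to a computation of the support of the finitely generated $S_0$-module $H^n_Q(R)_j$, and to show that this support is exactly $V(c(I))$. Since $H^n_Q(R)_j$ is a quotient of $H^n_Q(S)_j=\Dirsum_{|c|=-n-j}S_0z^c$, it vanishes whenever $-n-j<0$; thus the content of the statement is confined to the range $j\leq -n$, where $H^n_Q(S)_j$ is a nonzero free $S_0$-module, and I would treat that range. Once $\Supp H^n_Q(R)_j=V(c(I))$ is established, the equality $\dim_{S_0}H^n_Q(R)_j=\dim S_0/c(I)$ is immediate from $\dim_{S_0}N=\dim S_0/\Ann N=\max\{\dim S_0/\pp:\pp\in\Supp N\}$.

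For the inclusion $\Supp H^n_Q(R)_j\subseteq V(c(I))$ I would simply invoke Theorem~\ref{Anihilate}. Since $c(I)^{\ell(j)}H^n_Q(R)_j=0$ we get $c(I)\subseteq \Rad\big(\Ann_{S_0}H^n_Q(R)_j\big)$, hence $\Supp H^n_Q(R)_j\subseteq V(c(I))$ and in particular $\dim_{S_0}H^n_Q(R)_j\leq \dim S_0/c(I)$.

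The reverse inclusion is the heart of the matter, and here I would exploit the explicit free presentation
\[
G_1=\Dirsum_{i=1}^r H^n_Q(S)(-a_i,-b_i)_j\overset{\phi}{\To} G_0=H^n_Q(S)_j\To H^n_Q(R)_j\To 0
\]
constructed in the proof of Theorem~\ref{Anihilate}. The crucial observation is that, with respect to the monomial bases $\{e_iz^c\}$ of $G_1$ and $\{z^{c}\}$ of $G_0$, the description $\phi(e_iz^c)=\sum_{\beta}f_{i,\beta}z^{c-\beta}$ shows that every entry of the matrix of $\phi$ is one of the coefficients $f_{i,\beta}$, and hence lies in $c(I)$. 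Consequently, for any prime $\pp\supseteq c(I)$ the reduction $\phi\tensor_{S_0}\kappa(\pp)$ over the residue field $\kappa(\pp)$ is the zero map, so that
\[
H^n_Q(R)_j\tensor_{S_0}\kappa(\pp)\iso G_0\tensor_{S_0}\kappa(\pp),
\]
which is nonzero because $G_0$ is free of positive rank for $j\leq -n$. By Nakayama's lemma this forces $(H^n_Q(R)_j)_\pp\neq 0$, i.e.\ $\pp\in\Supp H^n_Q(R)_j$. Hence $V(c(I))\subseteq \Supp H^n_Q(R)_j$, and combining the two inclusions yields $\Supp H^n_Q(R)_j=V(c(I))$ together with the asserted equality of dimensions.

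I expect the lower bound to be the only genuine obstacle, the point being to rule out that the support is strictly smaller than $V(c(I))$; everything rests on reading off that the presenting matrix $\phi$ has all its entries in $c(I)$. An equivalent and perhaps cleaner phrasing of this step uses Fitting ideals: since $\Supp H^n_Q(R)_j=V\big(\Fitt_0(H^n_Q(R)_j)\big)=V\big(I_g(\phi)\big)$ with $g=\rk G_0$, and each $g\times g$ minor of $\phi$ is a sum of products of $g$ entries and thus lies in $c(I)^g\subseteq c(I)$, one gets $I_g(\phi)\subseteq c(I)$ and therefore $V(c(I))\subseteq V(I_g(\phi))$ directly. The one technical caveat to record is that the equality is to be understood in the nonvanishing range $j\leq -n$; for $j>-n$ the module $H^n_Q(R)_j$ is zero.
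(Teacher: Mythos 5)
Your argument is correct and follows essentially the same route as the paper: the upper bound comes from Theorem~\ref{Anihilate}, and the lower bound from the observation that all entries of the presentation matrix $\phi$ lie in $c(I)$, which the paper phrases exactly as your Fitting-ideal variant ($I_{n_0}(U_j)\subseteq c(I)$, hence $\dim S_0/c(I)\leq\dim S_0/I_{n_0}(U_j)=\dim H^n_Q(R)_j$). Your Nakayama/localization phrasing and the remark about the vacuous range $j>-n$ are accurate but do not change the substance.
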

\begin{proof}
As we have already seen, $H_{Q}^n(R)_j$ has the following $S_0$-presentation
\[
 \cdots \longrightarrow \Dirsum_{i=1}^r S_0^{n_i}(-a_i) \overset {\phi_j} \longrightarrow S_0^{n_0} \longrightarrow H^n_Q(R)_j \longrightarrow 0,
 \]
 where $n_0=\binom{-j-1}{n-1}$ and $n_i=\binom{-j+b_i-1}{n-1}$ for $i=1, \ldots, r$. Let $U_j$ be the matrix describing $\phi_j$ with respect to the canonical bases. Notice that $I_{n_0}(U_j)\subseteq c(I)$  where $I_{n_0}(U_j)$ is the ideal generated by the $n_0$-minors of matrix $U_j$. Thus
 \[
 \dim S_0/c(I)\leq \dim S_0/I_{n_0}(U_j)=\dim H^n_Q(R)_j .
 \]
Here the second equality follows from Formula (5) in \cite{AR1}. On the other hand,  Theorem \ref{Anihilate} implies that  $\dim H^n_Q(R)_j \leq \dim S_0/c(I)$. Therefore the desired equality follows.
\end{proof}

The following known fact is needed for the proof of the next corollary. For the convenience of the reader we include its proof.
\begin{Lemma}
\label{generic}
Let $M$ be a graded $S_0$-module with $\dim M>0$ and $|K|=\infty$. Suppose $f$ be a linear form such that $0:_Mf$ has finite length. Then
\[
\dim M/fM=\dim M-1.
\]
\end{Lemma}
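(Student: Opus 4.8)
The plan is to compare Hilbert functions across the multiplication-by-$f$ map. Writing $H_N$ for the Hilbert function of a finitely generated graded $S_0$-module $N$, the map given by multiplication with the linear form $f$ is a homogeneous map $f\colon M(-1)\to M$ of degree $0$, with kernel $(0:_M f)(-1)$ and cokernel $M/fM$. Thus there is an exact sequence of graded $S_0$-modules
\[
0\longrightarrow (0:_M f)(-1)\longrightarrow M(-1)\overset{f}\longrightarrow M\longrightarrow M/fM\longrightarrow 0.
\]
Additivity of the Hilbert function on the graded components then yields, for every $t$,
\[
H_{M/fM}(t)=H_M(t)-H_M(t-1)+H_{0:_M f}(t-1).
\]

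First I would use the finite-length hypothesis: since $0:_M f$ has finite length, $H_{0:_M f}(t-1)=0$ for $t\gg 0$, so that $H_{M/fM}(t)=H_M(t)-H_M(t-1)$ for all large $t$. Next I would pass to Hilbert polynomials. For $t\gg 0$ we have $H_M(t)=P_M(t)$, a polynomial of degree $\dim M-1$ with positive leading coefficient; here the hypothesis $\dim M>0$ guarantees $\deg P_M=\dim M-1\geq 0$. Hence for large $t$ the value $H_{M/fM}(t)$ agrees with the finite difference $P_M(t)-P_M(t-1)$. When $\dim M\geq 2$ this difference is eventually a polynomial of degree $(\dim M-1)-1=\dim M-2$ with positive leading coefficient, so $M/fM$ has a Hilbert polynomial of that degree, and by the characterization of dimension through the degree of the Hilbert polynomial we get $\dim M/fM=(\dim M-2)+1=\dim M-1$, as desired.

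The one point that requires separate care — and is really the only obstacle — is the boundary case $\dim M=1$. Then $\deg P_M=0$, so $P_M(t)-P_M(t-1)$ is identically zero, and the Hilbert-polynomial computation only shows that $M/fM$ has finite length, i.e.\ $\dim M/fM\leq 0$. To upgrade this to the equality $\dim M/fM=0=\dim M-1$ I would invoke graded Nakayama: since $\dim M>0$ forces $M\neq 0$ and $f\in\mm_0$, we have $fM\neq M$, whence $M/fM\neq 0$ and therefore $\dim M/fM=0$. The same remark shows $M/fM\neq 0$ in every case, so the claimed quantity is always well defined, and combining the two ranges of $\dim M$ completes the proof.
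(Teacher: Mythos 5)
Your proof is correct and takes essentially the same approach as the paper: the same four-term exact sequence $0\to (0:_Mf)(-1)\to M(-1)\overset{f}\to M\to M/fM\to 0$ followed by a Hilbert-theoretic count. The only cosmetic difference is that the paper reads off the dimension drop from the pole order of the Hilbert series at $t=1$ (via \cite[Corollary 4.1.8]{BH}), which absorbs the case $\dim M=1$ automatically, whereas your Hilbert-polynomial formulation requires the separate (and correctly supplied) Nakayama argument for that boundary case.
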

\begin{proof}
We denote by $\H_M(t)= \sum_{i\in \ZZ}\dim_K M_it^i$ the Hilbert-series of $M$. Consider the exact sequence $0\to 0:_Mf \to M(-1)  \overset{f}\to M \to M/fM \to 0$. Since $(0:_Mf )$ has finite length, it follows that $\H_{(0:_Mf )}(t)=Q_0(t)$, where $Q_0(t)$ is a polynomial in $t$ with $Q_0(1)\neq 0.$  We may also write  $\H_M(t)=Q(t)/(1-t)^d$ where $d=\dim M$ with $Q(1)\neq 0$. Hence
\begin{eqnarray*}
\H_{M/fM}(t) &= & \H_M(t)-t\H_M(t)+\H_{0:_Mf}(t) \\&=&   P(t)/(1-t)^{d-1}
\end{eqnarray*}
where  $P(t)=Q(t)+(1-t)^{d-1}Q_0(t)$  with $P(1)\neq 0$. Thus the desired equality follows from \cite[Corollary 4.1.8]{BH}.
\end{proof}

As another application of Theorem \ref{Anihilate} we have

\begin{Theorem}
\label{top}
Consider the hypersurface ring $R=S/fS$ where $f$ is a bihomogeneous polynomial in $S$. Suppose that $\dim S_0/c(f)\leq 1$. Then there exists a linear function $\ell: \ZZ_-\to \ZZ_+$ such that
\[
0\leq \reg H_Q^k(R)_j \leq \ell(j)
\]
for  all $k$ and  $j$.
\end{Theorem}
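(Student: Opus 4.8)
My plan is to reduce the entire statement to a single linear bound for $\reg H^n_Q(R)_j$, and then to treat the cases $\dim S_0/c(f)=0$ and $\dim S_0/c(f)=1$ separately. Since $y_1,\dots,y_n$ is a regular sequence on $S$, one has $H^k_Q(S)=0$ for $k\neq n$; applying $H^\bullet_Q(-)$ to $0\to S(-a,-b)\overset{f}{\longrightarrow} S\to R\to 0$, with $f$ bihomogeneous of bidegree $(a,b)$, therefore gives $H^k_Q(R)=0$ for $k\neq n-1,n$ and, in the $j$-th component, the four term exact sequence
\[
0\to H^{n-1}_Q(R)_j\to W_{j-b}(-a)\overset{\phi_j}{\longrightarrow} W_j\to H^n_Q(R)_j\to 0,
\]
where $W_j=H^n_Q(S)_j$ are free $S_0$-modules generated in degree $0$ (as in the proof of Theorem~\ref{Anihilate}). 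Splitting this into two short exact sequences and using that the regularity of a submodule is controlled by the two outer terms yields $\reg H^{n-1}_Q(R)_j\le\max\{a,1,\reg H^n_Q(R)_j+2\}$. Thus it suffices to bound $\reg H^n_Q(R)_j$ linearly, the other cohomology modules being zero.

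If $\dim S_0/c(f)=0$, then $c(f)$ is $\mm_0$-primary and Corollary~\ref{primary} applies at once. So assume $\dim S_0/c(f)=1$; after a faithfully flat base field extension I may assume $K$ infinite. By Corollary~\ref{dim1} and Theorem~\ref{Anihilate} each $N_j:=H^n_Q(R)_j$ is one-dimensional, supported on $V(c(f))$, and annihilated by a power $c(f)^{\ell(j)}$ with $\ell(j)$ linear in $j$. The one-dimensional minimal primes $\pp_1,\dots,\pp_s$ of $c(f)$ are finite in number and independent of $j$, so I can fix one generic linear form $\ell\in(S_0)_1$, avoiding $\bigcup_i(\pp_i)_1$ and the finitely many linear factors of $f$ in $S_0$, that serves all $j$ at once: it is a nonzerodivisor on $R$, the image $c(\bar f)$ of $c(f)$ in $\bar S_0=S_0/\ell S_0$ is $\bar\mm_0$-primary, and $0:_{N_j}\ell$ has finite length for all $j$ by Lemma~\ref{generic}. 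Writing $\bar R=R/\ell R=\bar S/\bar f\bar S$ and applying Corollary~\ref{primary} together with the reduction of the first paragraph to $\bar R$ bounds both $\reg H^n_Q(\bar R)_j$ and $\reg H^{n-1}_Q(\bar R)_j$ by linear functions of $j$.

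The comparison with the section comes from applying $H^\bullet_Q(-)$ to $0\to R(-1,0)\overset{\ell}{\longrightarrow} R\to\bar R\to 0$, which in the $j$-th component gives $N_j/\ell N_j\cong H^n_Q(\bar R)_j$. Since $\dim N_j\le1$ one has $\reg N_j=\max\{a_0,a_1+1\}$, where $a_i$ is the top nonvanishing degree of $H^i_{\mm_0}(N_j)$. The one-dimensional part is controlled by the section: as $N_j/\ell N_j$ has finite length, multiplication by $\ell$ is surjective on the Artinian module $H^1_{\mm_0}(N_j)$, its kernel has top degree $a_1+1$, and this kernel is a homomorphic image of $H^0_{\mm_0}(N_j/\ell N_j)$; hence $a_1+1\le\reg(N_j/\ell N_j)=\reg H^n_Q(\bar R)_j$, which is linearly bounded.

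The main obstacle is the finite-length summand, that is, bounding $a_0=\reg H^0_{\mm_0}(N_j)$, which the hyperplane section does not detect: a finite-length submodule sitting in high degree survives in $N_j$ while vanishing in $N_j/\ell N_j$, as the example $S_0/(x_1^2,x_1x_2^t)$ shows. To control it I would instead exploit the annihilation Theorem~\ref{Anihilate} through the finite filtration $N_j\supseteq c(f)N_j\supseteq\cdots\supseteq c(f)^{\ell(j)}N_j=0$, whose successive quotients $G_i=c(f)^iN_j/c(f)^{i+1}N_j$ are one-dimensional $S_0/c(f)$-modules generated in degree $ia$. Subadditivity of regularity along the filtration gives $\reg N_j\le\max_{0\le i<\ell(j)}\reg G_i$, so that everything reduces to bounding each $\reg G_i$ by $ia$ plus a constant independent of $i$ and $j$, each $G_i$ being again cut down by the same generic $\ell$ to the $\mm_0$-primary situation of Corollary~\ref{primary}. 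Establishing this uniform per-step bound — equivalently, uniform control of the finite-length parts of all the $G_i$ — is the delicate heart of the argument, and the linear growth of the number of steps $\ell(j)$ is precisely what produces the linear bound in $j$. Feeding the resulting bound on $\reg H^n_Q(R)_j$ back into the first paragraph then bounds $\reg H^{n-1}_Q(R)_j$, while the lower bound $0\le\reg$ follows as in Corollary~\ref{primary}.
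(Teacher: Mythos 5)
Your reduction to $k=n$, the treatment of the zero-dimensional case, the choice of a single generic linear form, and the identification of $N_j/\ell N_j$ with $H^n_Q(\bar R)_j$ all match the paper, and your local-cohomology argument bounding $a_1+1$ by $\reg H^n_Q(\bar R)_j$ is a correct (if different) substitute for one half of Eisenbud's formula $\reg N_j=\max\{\reg(0:_{N_j}\ell),\,\reg N_j/\ell N_j\}$. But the proof is not complete: you explicitly leave open the bound on the finite-length part $a_0$, calling the uniform per-step bound on the $G_i=c(f)^iN_j/c(f)^{i+1}N_j$ ``the delicate heart of the argument'' without establishing it. That is a genuine gap, and the proposed route is doubtful as stated: each $G_i$ is a module over the one-dimensional ring $S_0/c(f)$, and knowing only that it is generated in degree $ia$ does not control its regularity (over a one-dimensional graded ring, a cyclic module generated in degree $0$ can have arbitrarily large regularity, e.g. $K[x_1,x_2]/(x_1x_2,x_1^t)$), so ``generated in degree $ia$ plus a constant'' cannot be extracted from the filtration alone.

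The paper closes exactly this gap by bounding $\reg(0:_{N_j}x_1)$ from two sides simultaneously. First, the four-term sequence coming from $0\to R(-1,0)\to R\to\bar R\to 0$ gives a surjection $H^{n-1}_Q(\bar R)_j\to 0:_{N_j}x_1$, and since $H^n_Q(\bar R)_j$ has dimension $0$ the regularity of $H^{n-1}_Q(\bar R)_j$ is linearly bounded by \cite[Theorem 5.3 and Proposition 5.1(b)]{AR1}; this bounds the top degree of a generator of $0:_{N_j}x_1$ by a linear function $\ell'(j)$. Second, because $x_1$ annihilates $0:_{N_j}x_1$, the ideal $\mm_0$ acts on it through $\overline{\mm}_0$, and $\overline{\mm}_0^{k}\subseteq c(\bar f)$ since $c(\bar f)$ is $\overline{\mm}_0$-primary; combined with Theorem~\ref{Anihilate} this yields $\mm_0^{k\ell''(j)}(0:_{N_j}x_1)=0$, so the colon module is concentrated in degrees below $\ell'(j)+k\ell''(j)$ and its regularity is linearly bounded. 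You would need to supply an argument of this type (generator degrees via $H^{n-1}_Q(\bar R)_j$ plus annihilation by a power of $\mm_0$) to control $a_0$; the filtration by powers of $c(f)$ by itself does not do it.
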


\begin{proof}
 Note that $R$ has only two non-vanishing local cohomology  modules  $H_Q^k(R)$, namely for  $k=n-1$ and $k= n$. A similar argument as that used in  the proof of  \cite[Proposition 5.1]{AR1}(b) shows that $H^{n-1}_Q(R)_j$ is  linearly bounded, provided $H^{n}_Q(R)_j$ is  linearly bounded. Thus it suffices to consider the case $k=n$. The desired result follows from Corollary \ref{primary} in the case that $\dim S_0/c(f)=0$. Now let us assume that $\dim S_0/c(f)=1$.
We may assume that $K$ is infinite. Otherwise, we apply a suitable base field extension. Thus we can choose a linear form $g$ such that $0:_{H^n_Q(R)_j}g$ and $0:_{S_0/c(f)}g$ are finite length modules. After a change of coordinate we may assume that $g=x_1.$ For any $S_0$-module $M$ we set $\overline{M}=M/x_1M.$ We first observe  that $\overline{H^n_Q(R)_j}\iso H^n_Q(\overline{R})_j$. Indeed, since the functor $H^n_Q(-)$ is right exact,  the exact sequence $ R  \overset {x_1} \longrightarrow R \rightarrow \overline{R} \to 0$  induces the exact sequence  $ H^n_Q(R)_j  \overset {x_1} \longrightarrow H^n_Q(R)_j \rightarrow H^n_Q(\overline{R})_j \to 0$,  which yields the desired isomorphism.

Next observe that  $\overline{S_0/c(f)}=\overline{S_0}/c(\overline{f})$  where $\overline{f}$ is the image of $f$ under the canonical epimorphism $S_0\to \overline{S_0}$. Identifying $\overline{S_0}$ with $K[x_2,\ldots,x_n]$
the map $f\mapsto \overline{f}$ is obtained by substituting $x_1$ by $0$. Since $\dim S_0/c(f)=1$ and since $0:_{S_0/c(f)}x_1$ has finite length, it follows that
$\dim \overline{S_0}/c(\overline{f})=\dim \overline{S_0/c(f)}=\dim S_0/c(f) -1=0$, see Lemma~\ref{generic}.  By \cite[Proposition 20. 20]{Ei} we have
\begin{eqnarray}
\label{E}
\reg H^n_Q(R)_j=\max\{ \reg\big( 0:_{H^n_Q(R)_j}x_1\big), \reg\overline{H^n_Q(R)_j} \}.
\end{eqnarray}
Moreover, by Corollary  \ref{dim1},  we have  $\dim H^n_Q(\overline{R})_j= \dim \overline{S_0}/c(\overline{f})=0$. Hence, it follows from \cite[Theorem 5.3]{AR1} that $\reg\overline{H^n_Q(R)_j} =\reg H^n_Q(\overline{R})_j$  is linearly bounded as a function of $j$. Now we show that $\reg\big( 0:_{H^n_Q(R)_j}x_1\big)$ is linearly bounded, as well. Then by (\ref{E}) the desired conclusion follows. The exact sequence
$0\to  R  \overset {x_1} \longrightarrow R \rightarrow \overline{R} \to 0$  induces the exact sequence
\[
 H^{n-1}_Q(R)_j \longrightarrow H^{n-1}_Q(\overline{R})_j \rightarrow 0:_{H^n_Q(R)_j}x_1 \to 0.
 \]
In particular, we conclude that the highest degree of a  generator of $0:_{H^n_Q(R)_j}x_1$ is less than or equal to the highest degree of a generator of $H^{n-1}_Q(\overline{R})_j$.
As  $\reg H^n_Q(\overline{R})_j$ is linearly bounded, it follows  that there exists a linear function $\ell'$ such that $\reg H^{n-1}_Q(\overline{R})_j\leq \ell'(j)$, see the proof of \cite[Proposition 5.1]{AR1}(b). Thus this linear function $\ell'$  also bounds the highest degree of a  generator of $H^{n-1}_Q(\overline{R})_j$, and hence  the highest degree of a  generator of  $0:_{H^n_Q(R)_j}x_1$.

By Theorem \ref{Anihilate} there exists a linear function $\ell''$ such that
\[
c(f)^{\ell''(j)} (0:_{H^n_Q(R)_j}x_1) \subseteq c(f)^{\ell''(j)}H^n_Q(R)_j=0.
\]
Since $\dim \overline{S_0}/c(\overline{f})=0$,  it follows that  there exists an integer $k$ such that $\overline{\mm_0}^k\subseteq c(\overline{f})$. Hence we obtain
\begin{eqnarray*}
\mm_0^{k\ell''(j)}(0:_{H^n_Q(R)_j}x_1)&=&\overline{\mm}_0^{k\ell''(j)}(0:_{H^n_Q(R)_j}x_1) \\ &\subseteq & c(\overline{f})^{\ell''(j)}(0:_{H^n_Q(R)_j}x_1)\\
&=&c(f)^{\ell''(j)}(0:_{H^n_Q(R)_j}x_1)=0.
\end{eqnarray*}
We conclude that $( 0:_{H^n_Q(R)_j}x_1)_{i}=0$ for $i\geq \ell'(j)+k\ell''(j)$ and therefore,
\[
\reg ( 0:_{H^n_Q(R)_j}x_1) \leq \ell'(j)+k\ell''(j).
\]
\end{proof}
For a hypersurface ring of bidegree $(d, 1)$ we have the following more precise result.

\begin{Proposition}
\label{regular}
Let $R=S/fS$ where $f=\sum_{i=1}^nf_iy_i$ with $\deg f_i=d$ for all $i$. If $f_1, \ldots, f_n$ is a regular sequence,  then
\[
\reg H^n_Q(R)_j= \reg S_0/c(I)^{-n-j+1}=-dj-n.
\]
\end{Proposition}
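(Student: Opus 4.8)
The plan is to write $p=-n-j+1$ and to prove the two stated equalities separately, using throughout that since $f_1,\dots,f_n$ is a regular sequence, $c(I)=(f_1,\dots,f_n)$ is a complete intersection of height $n$.

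For the equality $\reg S_0/c(I)^{p}=-dj-n$ I would argue through the $c(I)$-adic filtration. Because $c(I)$ is generated by a regular sequence of forms of degree $d$, its associated graded ring is a polynomial ring over $S_0/c(I)$, so that $c(I)^{k}/c(I)^{k+1}\iso (S_0/c(I))^{\binom{k+n-1}{n-1}}(-kd)$ for $0\le k\le p-1$. Since powers of a complete intersection are Cohen--Macaulay, I would (after a harmless extension of $K$) cut down by a general $S_0/c(I)^{p}$-regular sequence of linear forms, which leaves the regularity unchanged and reduces to the Artinian case, where the regularity is just the top nonvanishing degree. Reading this top degree off the filtration, together with $\reg S_0/c(I)=n(d-1)$, gives $(p-1)d+n(d-1)=-dj-n$.

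For the equality $\reg H^n_Q(R)_j=\reg S_0/c(I)^{p}$ I would return to the explicit $S_0$-presentation of $H^n_Q(R)_j$ from the proof of Theorem~\ref{Anihilate}: it is the cokernel of the single map $\phi_j\colon \Dirsum_{|c|=p}S_0(-d)z^{c}\to\Dirsum_{|c'|=p-1}S_0z^{c'}$, $z^{c}\mapsto\sum_{i}f_iz^{c-e_i}$, whose matrix has all entries of degree $d$, arranged in the banded ``symmetric-power'' pattern determined by the $f_i$. The decisive step is to recognise this matrix as the transpose of the top differential in the minimal graded free resolution of $S_0/c(I)^{p}$ -- the Cohen--Macaulay, codimension-$n$ resolution of a power of a complete intersection -- by matching the ranks $\binom{p+n-1}{n-1}$ and $\binom{p+n-2}{n-1}$ and the degree gap $d$ between its last two free modules $F_{n-1}$ and $F_{n}$. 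Applying $\Hom_{S_0}(-,S_0)$ and using Cohen--Macaulayness, the dual complex is acyclic and exhibits $H^n_Q(R)_j\iso \Ext^n_{S_0}(S_0/c(I)^{p},S_0)$ up to the twist that makes it generated in degree $0$; equivalently $H^n_Q(R)_j$ is, up to shift, the canonical module of $S_0/c(I)^{p}$, hence Cohen--Macaulay of codimension $n$. In this dual resolution the free module dual to $F_0=S_0$ sits at the very top and, after the twist, equals $S_0(-(p+n-1)d)$, whose contribution dominates the other twists and yields $\reg H^n_Q(R)_j=(p+n-1)d-n=-dj-n$, agreeing with $\reg S_0/c(I)^{p}$.

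The main obstacle is precisely this identification: verifying that the explicit banded matrix $\phi_j$ coincides, up to signs and reindexing, with the dual of the top differential of the complete-intersection resolution of $S_0/c(I)^{p}$, and pinning down the twist so that the two regularities agree exactly rather than up to an additive constant. This is where the regular-sequence hypothesis is indispensable: it makes $S_0/c(I)^{p}$ Cohen--Macaulay, so that dualizing its resolution again produces a resolution (of the canonical module), and it forces the top differential to be linear in the $f_i$, hence of degree $d$, which is exactly what matches $\phi_j$.
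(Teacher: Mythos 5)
Your derivation of the second stated equality, $\reg H^n_Q(R)_j=\reg S_0/c(I)^{p}$, rests on an identification that is not merely unproven (as you concede) but actually false for $n\geq 3$. Take $n=m=3$, $d=1$, $f_i=x_i$, $j=-4$, so $p=2$. The presentation of $H^3_Q(R)_{-4}$ is a $3\times 6$ matrix ($G_1$ has rank $\binom{p+n-1}{n-1}=6$, $G_0$ has rank $\binom{p+n-2}{n-1}=3$), and a direct computation (or the explicit resolution quoted from \cite{AR1} in the paper's proof) shows that $H^3_Q(R)_{-4}$ has Hilbert function $(3,3)$, hence length $6$. On the other hand the minimal free resolution of $S_0/\mm_0^2$ is $0\to S_0(-4)^{3}\to S_0(-3)^{8}\to S_0(-2)^{6}\to S_0\to S_0/\mm_0^2\to 0$, so the dual of its top differential is a $3\times 8$ matrix -- your rank count $\rank F_{n-1}=\binom{p+n-1}{n-1}$ is wrong for $n\geq 3$ -- and $\Ext^3_{S_0}(S_0/\mm_0^2,S_0)$, twisted to be generated in degree $0$, is the Matlis dual of $S_0/\mm_0^2$ and has Hilbert function $(3,1)$, hence length $4$. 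So $H^n_Q(R)_j$ is \emph{not} the canonical module of $S_0/c(I)^{p}$ up to twist, and the cokernel of $\phi_j$ cannot be matched with $\Ext^n_{S_0}(S_0/c(I)^{p},S_0)$. (The identification does happen to hold for $n=2$, which is presumably what misled you.) The two regularities still agree, but your route cannot establish this.

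The paper's argument avoids all of this by a flat base change: it first treats the case $f_i=x_i$, where $H^n_Q(S/(\sum_i x_iy_i))_j$ has an explicitly known minimal free $S_0$-resolution (quoted from \cite{AR1}) from which $\reg=-j-n$ is read off, and where $c(I)^{p}=\mm_0^{p}$ has a linear resolution giving $\reg S_0/\mm_0^{p}=p-1$; it then applies the endomorphism $x_i\mapsto f_i$, which is flat precisely because $f_1,\dots,f_n$ is a regular sequence, to transport both resolutions, multiplying every shift by $d$. Your treatment of the first equality, $\reg S_0/c(I)^{p}=-dj-n$, via the $c(I)$-adic filtration with graded pieces $(S_0/c(I))^{\binom{k+n-1}{n-1}}(-kd)$, Cohen--Macaulayness of powers of complete intersections, and Artinian reduction, is correct and is a legitimate alternative to the paper's flatness argument for that half; but without a valid proof of the first equality the Proposition is not established.
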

\begin{proof}
We first assume that $f_i=x_i$ for $i=1, \ldots, n$  and set $R'=S/gS$ where $g=\sum_{i=1}^nx_iy_i $. By the statement after \cite[Proposition 4.5]{AR1},  $H^n_Q(R')_j$  has the following free $S_0$-resolution
\begin{eqnarray*}
 0\to S_0^{\beta_n}(j)\to \dots \to S_0^{\beta_3}(n+j-3)\to
 S_0^{\beta_2}(n+j-2) \to \\ S_0^{\beta_1}(-1) \to S_0^{\beta_0}\to H^n_Q(R')_j\to 0.
\end{eqnarray*}
The map $\pi:S_0 \to S_0$, $x_i\mapsto f_i$,  is a flat endomorphism, because $f_1,\ldots,f_n$ is a regular sequence. Applying $-\tensor_{S_0}T$ to the above exact sequence, where $T$ is $S_0$ viewed as an $S_0$-module via $\pi$, we get the following $S_0$-free resolution for $H^n_Q(R)_j$
\begin{eqnarray*}
 0\to S_0^{\beta_n}(dj)\to \dots \to S_0^{\beta_3}(d(n+j-3))\to S_0^{\beta_2}(d(n+j-2)) \to\\  S_0^{\beta_1}(-d) \to S_0^{\beta_0}\to H^n_Q(R)_j\to 0.
\end{eqnarray*}
It follows that $\reg H^n_Q(R)_j= -dj-n.$ Now we prove the second equality. As before, we first assume that $f_i=x_i$ for $i=1, \ldots, n$ and hence $c(I)=\mm_0$ where $\mm_0$ is the graded maximal ideal of $S_0$. We set $k=-n-j+1$.  Due to the well-known fact that  $\mm_0^k$ has a linear resolution,  the minimal graded  free $S_0$-resolution of $S_0/\mm_0$ is of the form
\[
0\to S_0^{\beta_n}(n+k-1)\to \dots \to
 S_0^{\beta_1}(k+1) \to \\ S_0^{\beta_0}(k) \to S_0\to S_0/\mm_0^k\to 0.
\]

 Hence $ \reg S_0/\mm_0^k=k-1.$ As before, by using flatness of $\pi$, we obtain the following free resolution for $S_0/c(I)^k$
\[
0\to S_0^{\beta_n}(d(n+k-1))\to \dots \to
 S_0^{\beta_1}(d(k+1)) \to \\ S_0^{\beta_0}(dk) \to S_0\to S_0/c(I)^k\to 0.
\]
Hence $\reg S_0/c(I)^k=d(n+k-1)-n=-dj-1$.
\end{proof}
For the proof of the next corollary we need the following
\begin{Lemma}
\label{regsum}
Let   $R=S/ghS$  and  $R'=S/hS$ where $g\in S_0$ is a homogeneous polynomial and $h\in S$ is a bihomogeneous polynomial. Then
\[
\reg  H^n_Q(R)_j=  \reg H^n_Q(R')_j + \deg g.
\]
\end{Lemma}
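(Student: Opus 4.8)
The plan is to compare $H^n_Q(R)_j$ and $H^n_Q(R')_j$ through the explicit $S_0$-presentations produced in the proof of Theorem~\ref{Anihilate}, and to show that their minimal graded free $S_0$-resolutions agree in homological degree $0$ but differ by the twist $(-\deg g)$ in every higher homological degree. First I would record the presentations. Writing $h=\sum_{|\beta|=b}h_\beta y^\beta$ with $h_\beta\in S_0$ of degree $a$, the proof of Theorem~\ref{Anihilate} gives
\[
G_1'\overset{\phi'}{\longrightarrow} G_0\longrightarrow H^n_Q(R')_j\longrightarrow 0,\qquad G_0=\Dirsum_{|c|=-n-j}S_0z^c,\quad \phi'(z^c)=\sum_\beta h_\beta z^{c-\beta}.
\]
Since $g\in S_0$ we have $gh=\sum_\beta (gh_\beta)y^\beta$, so the content coefficients of $gh$ are exactly the $gh_\beta$, of degree $a+\deg g$. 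Hence the analogous presentation of $H^n_Q(R)_j$ has the \emph{same} free module $G_0$ in homological degree $0$, and its structure map $\phi$ satisfies $\phi(z^c)=\sum_\beta gh_\beta z^{c-\beta}=g\,\phi'(z^c)$. Setting $U'=\Im\phi'$ and $U=\Im\phi$, this yields the key identity $U=gU'$.

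Next I would exploit that $g$ is a nonzerodivisor on the free module $G_0$. Multiplication by $g$ then induces an isomorphism of graded $S_0$-modules $U'(-\deg g)\xrightarrow{\ \cong\ }gU'=U$. Consequently, if $\cdots\to F_2\to F_1\xrightarrow{\epsilon'}U'\to 0$ is the minimal graded free resolution of $U'$, then $\cdots\to F_2(-\deg g)\to F_1(-\deg g)\to U\to 0$ is the minimal graded free resolution of $U$. Splicing these with the exact sequences $0\to U'\to G_0\to H^n_Q(R')_j\to 0$ and $0\to U\to G_0\to H^n_Q(R)_j\to 0$ produces minimal graded free resolutions
\[
\cdots\to F_1\to G_0\to H^n_Q(R')_j\to 0,\qquad \cdots\to F_1(-\deg g)\to G_0\to H^n_Q(R)_j\to 0.
\]
To see that the splices are minimal I would use that $H^n_Q(R')_j$ and $H^n_Q(R)_j$ are generated in degree $0$, whereas $U'$ is generated in degree $a\ge 1$, so $U'\subseteq\mm_0G_0$; thus $G_0$ is the minimal free cover in homological degree $0$ and no cancellation occurs at the splicing step.

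Finally I would read off the regularity from its defining formula (stated just before Corollary~\ref{primary}). The two resolutions share the free module $G_0$, concentrated in degree $0$, in homological position $0$, and in every position $i\ge 1$ the $i$th free module for $H^n_Q(R)_j$ is that for $H^n_Q(R')_j$ twisted by $(-\deg g)$. Writing $D=\max_{i\ge 1}\{(\text{largest generator degree of }F_i)-i\}$, the definition of regularity gives $\reg H^n_Q(R')_j=\max\{0,D\}$ and $\reg H^n_Q(R)_j=\max\{0,D+\deg g\}$. Since $F_1$ minimally generates $U'$ in degree $a\ge 1$, the term $i=1$ already contributes $a-1\ge 0$, so $D\ge 0$; hence both maxima are attained in positive homological degree and $\reg H^n_Q(R)_j=D+\deg g=\reg H^n_Q(R')_j+\deg g$, as claimed.

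The step I expect to be the main obstacle is the bookkeeping that guarantees $D\ge 0$ and, with it, that the degree-$0$ contribution of $G_0$ never dominates the maximum — equivalently, that the clean twist by $\deg g$ survives passage to regularity. This hinges on the first syzygies living in strictly positive degree, i.e. on $a\ge 1$. The only way this can fail is $a=0$, in which case $h$ involves no $x$-variables, $c(h)=S_0$, and $H^n_Q(R')_j=0$ by Theorem~\ref{Anihilate} while $H^n_Q(R)_j$ need not vanish; that degenerate case must be excluded or treated separately. All remaining ingredients — the presentation from Theorem~\ref{Anihilate}, right exactness of $H^n_Q(-)$, and generation of $H^n_Q(R)_j$ in degree $0$ — are already available.
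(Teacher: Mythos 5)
Your argument is correct and follows essentially the same route as the paper: both rest on the observation that if $F/U$ is the standard $S_0$-presentation of $H^n_Q(R')_j$, then $F/gU$ is the standard presentation of $H^n_Q(R)_j$, and your resolution-splicing and minimality bookkeeping just makes explicit what the paper compresses into ``this yields the desired conclusion.'' Your closing caveat about the case $\deg_x h=0$ (where $U\not\subseteq \mm_0 F$ and $H^n_Q(R')_j$ may vanish) flags a genuine boundary case that the paper also leaves untreated.
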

\begin{proof}
Let $F/U_j$ be the standard  presentation of  $H^n_Q(R')_j$, as described in  \cite[Section~3, page 322]{AR1}.  Then it follows that $F/gU_j$ is the standard presentation of $H^n_Q(R)_j$. This yields the desired conclusion.
\end{proof}

\begin{Corollary}
\label{twosummands}
Let $R=S/fS$ where $f=f_1y_1+f_2y_2$ is a bihomogeneous polynomial in $S$ of bidegree $(d, 1)$,  and set $g=\gcd(f_1, f_2)$. Then
\[
\reg  H^2_Q(R)_j=\left\{
\begin{array}{cc}
-(d-\deg g)j+\deg g-2 & \text{if  $\deg g<d$},\\
\deg g & \text{if   $\deg g=d$.}
\end{array}
\right.
\]
\end{Corollary}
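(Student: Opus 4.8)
The plan is to use the greatest common divisor to split off a scalar factor and thereby reduce to Proposition~\ref{regular}. Write $f_i = g f_i'$ for $i=1,2$, so that $f = g h$ with $h = f_1'y_1 + f_2'y_2$ and $\gcd(f_1',f_2')=1$. Since $g\in S_0$ is homogeneous, Lemma~\ref{regsum} applies to $R = S/ghS$ and $R' = S/hS$ and gives
\[
\reg H^2_Q(R)_j = \reg H^2_Q(R')_j + \deg g,
\]
so the whole problem is reduced to computing $\reg H^2_Q(R')_j$ for the coprime coefficients $f_1',f_2'$, which have degree $d-\deg g$.

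First I would treat the case $\deg g < d$, where $\deg f_i' = d-\deg g > 0$. Because $S_0=K[x_1,\ldots,x_m]$ is a unique factorization domain, coprimeness forces $\height(f_1',f_2')=2$, and as $S_0$ is Cohen--Macaulay this means that $f_1',f_2'$ is a regular sequence (this is where $m\geq 2$ enters; for smaller $m$ the situation is degenerate). Proposition~\ref{regular}, applied with $n=2$ and coefficient degree $d-\deg g$ in place of $d$, then yields $\reg H^2_Q(R')_j = -(d-\deg g)j - 2$, and adding $\deg g$ from the displayed identity produces exactly $-(d-\deg g)j+\deg g-2$, the first line of the formula.

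The remaining case $\deg g = d$ is where the main difficulty lies, since then $f_1',f_2'\in K$ and $h$ is, after a linear change of the variables $y_1,y_2$, a single variable; consequently $QR'$ becomes principal and $H^2_Q(R')=0$, so Lemma~\ref{regsum} can no longer be invoked (its tacit hypothesis is that the standard presentation of $H^2_Q(R')_j$ is nonzero). Instead I would compute $H^2_Q(R)_j$ directly from the presentation in the proof of Theorem~\ref{Anihilate} and Corollary~\ref{dim1}: the presenting matrix is the $(-j-1)\times(-j)$ bidiagonal matrix with entries $f_1,f_2$, which here factors as $g$ times the constant bidiagonal matrix with entries $c_1,c_2$ (where $f_i=c_ig$). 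That constant matrix has full row rank $-j-1$, so after $K$-linear column operations the presentation becomes $g\cdot[\,I_{-j-1}\mid 0\,]$, whence
\[
H^2_Q(R)_j \iso (S_0/gS_0)^{\oplus(-j-1)}
\]
with all generators in degree $0$. Reading off $\reg$ from the resolution $0\to S_0(-d)\to S_0\to S_0/(g)\to 0$ of $S_0/(g)$ then gives the constant value recorded in the second line.

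Thus the hard part is precisely the boundary case $\deg g = d$: one must notice that the clean factorization argument collapses because the auxiliary top local cohomology vanishes, and switch to a direct analysis of the presentation matrix, keeping careful track of the twist by $d$ so that the resulting constant comes out as stated. A secondary technical point, needed when $\deg g<d$, is the passage from $\gcd(f_1',f_2')=1$ to the regular-sequence hypothesis of Proposition~\ref{regular}, which rests on $S_0$ being a Cohen--Macaulay unique factorization domain.
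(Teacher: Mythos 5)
Your treatment of the case $\deg g<d$ coincides with the paper's: factor $f=gh$ with $h=h_1y_1+h_2y_2$ and $\gcd(h_1,h_2)=1$, observe that two coprime homogeneous polynomials of positive degree in $S_0$ form a regular sequence, and combine Proposition~\ref{regular} (with $n=2$ and coefficient degree $d-\deg g$) with Lemma~\ref{regsum}. That part is correct and is exactly the paper's argument.

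The case $\deg g=d$ is where you diverge from the paper, and where there is a genuine problem --- though not the one you flagged. The paper simply applies Lemma~\ref{regsum} to $R'=S/hS$ even though $H^2_Q(R')=0$, tacitly reading $\reg 0=0$; you rightly distrust this and instead compute the presentation directly, obtaining $H^2_Q(R)_j\iso (S_0/gS_0)^{\oplus(-j-1)}$ with all generators in degree $0$. That isomorphism is correct (alternatively: multiplication by $h=c_1y_1+c_2y_2$ is surjective on $H^2_Q(S)_{j-1}\to H^2_Q(S)_j$, so the image of multiplication by $f=gh$ is $g\cdot H^2_Q(S)_j$). But your last step does not follow from it: the resolution $0\to S_0(-\deg g)\to S_0\to S_0/(g)\to 0$ gives $\reg S_0/(g)=\max\{0,\deg g-1\}=\deg g-1$, not $\deg g$. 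So your (correct) computation yields $\reg H^2_Q(R)_j=\deg g-1$, off by one from the second line of the statement; for instance with $m=1$ and $f=x(y_1+y_2)$ one gets $H^2_Q(R)_j\iso K^{-j-1}$, of regularity $0$, while the statement predicts $1$. The sentence ``reading off $\reg$ \dots gives the constant value recorded in the second line'' is therefore false as written. What your argument actually exposes is that the published value in the case $\deg g=d$ (and the paper's own one-line justification of it via Lemma~\ref{regsum}, whose proof breaks down precisely when $U_j=F$, i.e.\ when $H^2_Q(R')_j=0$) appears to be off by one; if you intend to prove the statement as printed, this step is a genuine gap, and if you intend to correct it, you should say so explicitly and record $\deg g-1$.
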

\begin{proof}
If $\deg g<d$, then we may write $f=gh$ where $h=h_1y_1+h_2y_2$ with $\gcd(h_1, h_2)=1$ and  $\deg h_i>0$ for $i=1, 2$. Note that $h_1, h_2$ is a regular sequence. Hence by Proposition \ref{regular} and Lemma \ref{regsum} we have the first equality. If $\deg g=d$, then $\deg h_1=\deg h_2=0$. Hence $H^2_Q(R')=0$ where $R'=S/hS$. Therefore, the second equality follows from Lemma \ref{regsum}.
\end{proof}

\section{An upper bound for the regularity of local cohomology  of multigraded modules}

Let $K$ be a field and $S=K[x_1,\ldots,x_m,y_1,\ldots,y_n]$ be the polynomial ring over $K$ in the variables $x_1,\ldots,x_m,y_1,\ldots,y_n$. We consider $S$ as standard $\ZZ^m\times \ZZ^n$-graded $K$-algebra. Let $M$ be a finitely generated  $\ZZ^m\times \ZZ^n$-graded $S$-module. Computing  local cohomology by using the \v{C}ech complex shows that $H^s_Q(M)$ is naturally $\ZZ^m\times \ZZ^n$-graded. Therefore, in view of the fact that
$
H^s_Q(M)_j=\Dirsum_{k}H^s_Q(M)_{(k,j)}
$
we see that  the  $\ZZ$-graded components $H^s_Q(M)_j$ of local the cohomology modules  $H^s_Q(M)$ are naturally $\ZZ^m$-graded $S_0$-modules where $S_0$ is the standard $\ZZ^m$-graded  $K$-algebra $K[x_1,\ldots,x_m]$, see \cite[Section 1]{AR4} for more details. In particular,  $H^s_Q(M)_j$ may also be viewed as a $\ZZ$-graded module over $S_0$.
We recall the following theorem from \cite{BH1}.
\begin{Theorem}
\label{BH}
Let $N$ be a finitely generated $\ZZ^m$-graded $S_0$-module. Let

\[
0 \rightarrow F_k \rightarrow \cdots \rightarrow F_1 \rightarrow F_0 \rightarrow N \rightarrow 0,
\]
be  the minimal graded free $S_0$-resolution of $N$ with  $F_i=\Dirsum_ {j=1}^{t_i} S_0(-a_{ij})$ for $i=1,\ldots,k$. Assume that  the first multigraded shifts $a_{0j}$  of $N$ belong to  $\NN^m$. Then for all $i$ and all $j=1, \ldots, t_i$ we have
\[
x^{a_{ij}}| \lcm(x^{a_{11}}, \ldots, x^{a_{1{t_1}}}).
\]
\end{Theorem}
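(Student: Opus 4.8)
The plan is to argue along the resolution, using the single structural fact that every syzygy module of $N$ is contained in a free module and hence is torsion-free. For $i\geq 1$ let $\Omega_i=\Im(F_i\to F_{i-1})=\Ker(F_{i-1}\to F_{i-2})$ be the $i$-th syzygy module, with the convention $F_{-1}:=N$; thus $\Omega_i\subseteq F_{i-1}$, and the images $m_1,\ldots,m_{t_i}$ of the chosen basis of $F_i$ form a minimal homogeneous system of generators of $\Omega_i$ of degrees $a_{i1},\ldots,a_{it_i}$. Writing $c_i=\lcm(x^{a_{i1}},\ldots,x^{a_{it_i}})$, the assertion is that $x^{a_{ij}}\mid c_1$ for all $i\geq 1$ and all $j$.

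First I would record that all shifts lie in $\NN^m$. Since the differentials of a minimal resolution have entries in $\mm_0$, for each $j$ there is an index $l$ with $a_{i+1,j}\geq a_{il}$ coordinatewise; hence the hypothesis $a_{0j}\in\NN^m$ propagates to $a_{ij}\in\NN^m$ for all $i$ and $j$. This is what makes the $x^{a_{ij}}$ honest monomials and turns divisibility into coordinatewise comparison of exponent vectors.

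The heart of the argument is the single step $x^{a_{i+1,j}}\mid c_i$ for every $i\geq 1$. Fix a minimal generator of $\Omega_{i+1}$ of degree $a:=a_{i+1,j}$; as an element of $F_i=\Dirsum_l S_0(-a_{il})$ it is a homogeneous relation $(r_1,\ldots,r_{t_i})$ satisfying $\sum_l r_l m_l=0$ in $F_{i-1}$, where $r_l$ is homogeneous of degree $a-a_{il}$. Suppose that for some coordinate $p$ the $p$-th coordinate of $a$ exceeded $\max_l (a_{il})_p$, i.e. the $p$-th exponent of $c_i$. Then for every $l$ with $r_l\neq 0$ the degree $a-a_{il}$ has positive $p$-th coordinate, so $x_p\mid r_l$; writing $r_l=x_p r_l'$ we obtain $x_p\big(\sum_l r_l' m_l\big)=0$ in the free, hence torsion-free, module $F_{i-1}$, and therefore $\sum_l r_l' m_l=0$. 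Thus $(r_1',\ldots,r_{t_i}')$ again belongs to $\Omega_{i+1}$ and our generator equals $x_p\cdot(r_1',\ldots,r_{t_i}')\in\mm_0\Omega_{i+1}$, contradicting minimality. Hence the $p$-th coordinate of $a$ is at most that of $c_i$ for every $p$, which is precisely $x^{a_{i+1,j}}\mid c_i$.

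Finally I would assemble the global bound. The step just proved gives $x^{a_{i+1,l}}\mid c_i$ for all $l$, so $c_{i+1}\mid c_i$; iterating yields $\cdots\mid c_2\mid c_1$ and hence $c_i\mid c_1$ for every $i$. Consequently, for $i\geq 2$ we get $x^{a_{ij}}\mid c_{i-1}\mid c_1$, while for $i=1$ the divisibility $x^{a_{1j}}\mid c_1$ holds by the very definition of $c_1$. This proves $x^{a_{ij}}\mid c_1=\lcm(x^{a_{11}},\ldots,x^{a_{1t_1}})$ for all $i\geq 1$. I expect the only delicate point to be the key step above: the crucial observation is that minimality of the resolution together with torsion-freeness of the ambient free module forbids a single variable from dividing all coefficients of a minimal syzygy, and it is exactly this absence of a common variable factor that prevents the syzygy degrees from outgrowing the lcm of the preceding ones and forces the descending chain of lcm monomials $\cdots\mid c_2\mid c_1$.
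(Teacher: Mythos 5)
Your argument is correct. Note first that the paper itself gives no proof of this statement: it is quoted verbatim from Bruns--Herzog \cite[Theorem 3.1]{BH1}, so there is no internal proof to compare against; your write-up is in the spirit of the original argument (minimality plus torsion-freeness of the ambient free module forbids a common variable factor in a minimal syzygy, whence the descending chain $\cdots \mid c_2\mid c_1$ of lcm monomials). All the individual steps check out: the propagation of $a_{ij}\in\NN^m$ along the resolution, the fact that a homogeneous element of multidegree $a-a_{il}$ in the finely graded ring $S_0$ is a scalar multiple of the monomial $x^{a-a_{il}}$ and hence divisible by $x_p$ whenever the $p$-th coordinate of $a-a_{il}$ is positive, the cancellation of $x_p$ in the torsion-free module $F_{i-1}$, and the graded Nakayama contradiction. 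Your restriction of the key step to $i\geq 1$ is not a gap but a necessity: for $i=0$ the target $F_{-1}=N$ need not be torsion-free and the corresponding divisibility $x^{a_{1j}}\mid\lcm(x^{a_{01}},\dots)$ is false in general (e.g. $N=S_0/(x_1^5)$), which is consistent with the theorem's indexing of the $t_i$ and $a_{ij}$ only for $i\geq 1$.
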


As an immediate consequence we obtain

\begin{Corollary}
\label{onlythisistrue}
The regularity of $N$ is bounded by a constant $c$ which  only depends on the shifts $a_{ij}$ with $i\leq 1$.
\end{Corollary}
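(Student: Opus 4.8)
The plan is to read off the bound directly from Theorem~\ref{BH} by passing from the multigraded shifts to their total degrees. First I would recall that when $N$ is regarded as a $\ZZ$-graded $S_0$-module, its minimal free resolution is obtained from the multigraded one by forgetting the finer grading; hence the $\ZZ$-graded shift attached to the multidegree $a_{ij}\in\NN^m$ is its total degree $|a_{ij}|=\sum_{l=1}^m(a_{ij})_l$. With the definition of regularity recalled above this gives
\[
\reg N=\max_{i,j}\{|a_{ij}|-i\}.
\]

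Next I would invoke Theorem~\ref{BH}. Set $L=\lcm(x^{a_{11}},\ldots,x^{a_{1t_1}})$. The theorem asserts $x^{a_{ij}}\mid L$ for all $i$ and all $j=1,\ldots,t_i$, and divisibility of monomials means precisely that $(a_{ij})_l\leq\deg_{x_l}L$ in each variable $x_l$. Summing these inequalities over $l$ yields $|a_{ij}|\leq\deg L=:|L|$ for every $i\geq 1$, where the number $|L|$ manifestly depends only on the first-syzygy shifts $a_{1k}$.

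Finally I would split the maximum according to homological degree. For $i\geq 1$ we obtain $|a_{ij}|-i\leq|L|-i\leq|L|-1$, while for $i=0$ the term $|a_{0j}|-0=|a_{0j}|$ is governed by the generator shifts. Putting
\[
c=\max\Big\{\max_j|a_{0j}|,\ |L|-1\Big\},
\]
we conclude $\reg N\leq c$, and by construction $c$ is determined solely by the shifts $a_{ij}$ with $i\leq1$. There is no genuine obstacle here, since everything is an accounting of total degrees; the only point to keep straight is that the divisibility conclusion of Theorem~\ref{BH} translates into the additive inequality $|a_{ij}|\leq|L|$, which is exactly what converts the multigraded statement into a bound for the $\ZZ$-graded regularity.
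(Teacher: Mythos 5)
Your argument is correct and is precisely the intended one: the paper offers no written proof, calling the corollary an immediate consequence of Theorem~\ref{BH}, and your computation (minimality of the resolution is preserved under coarsening to the $\ZZ$-grading, divisibility $x^{a_{ij}}\mid L$ gives $|a_{ij}|\leq |L|$, then split off the $i=0$ shifts) is exactly the accounting that makes it immediate. No discrepancies with the paper's approach.
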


As a main result of this section we  have

\begin{Theorem}
\label{againinessen}
Let $M$ be a finitely generated  $\ZZ^m\times \ZZ^n$-graded $S$-module. Then there exists an integer $c$, which only depends on the $x$-shifts of the bigraded resolution of $M$,  such that
\[
|\reg H^s_Q(M)_j|\leq c\quad \text{for all $s$ and all $j$}.
\]
\end{Theorem}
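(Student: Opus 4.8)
The plan is to reduce the computation of $H^s_Q(M)_j$ to the homology of a complex of free $S_0$-modules whose $x$-shifts lie in a finite set that does not depend on $j$, and then to feed this into Corollary~\ref{onlythisistrue}. First I would fix a $\ZZ^m\times\ZZ^n$-graded free resolution $F_\bullet\colon \cdots\to F_1\to F_0\to M\to 0$ with $F_i=\Dirsum_\ell S(-a_{i\ell},-b_{i\ell})$, the $a_{i\ell}\in\ZZ^m$ being the $x$-shifts. Since each $F_i$ is free and $y_1,\dots,y_n$ is an $S$-regular sequence, we have $H^q_Q(F_i)=0$ for $q\neq n$, so the hypercohomology spectral sequence of $H^\bullet_Q(F_\bullet)$ collapses onto the single row $q=n$, exactly as in \cite[Theorem 1.1]{AR1}. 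This identifies $H^s_Q(M)_j$ with the homology at the $(n-s)$-th spot of the complex $G_{\bullet,j}\colon \cdots\to G_{1,j}\to G_{0,j}\to 0$, where $G_{i,j}=H^n_Q(F_i)_j$. Using that $H^n_Q(S)$ is, in the $x$-direction, a free $S_0$-module generated in degree $0$ (Formula (1) in \cite{AR1}), I would record that $G_{i,j}=\Dirsum_\ell S_0(-a_{i\ell})^{\mu_{i\ell}(j)}$ is a free $\ZZ^m$-graded $S_0$-module all of whose $x$-shifts lie in the finite set $\{a_{i\ell}\}$; in particular $j$ affects only the multiplicities $\mu_{i\ell}(j)$, never the shift set.

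The heart of the argument is to bound $\reg\Coker d_i$ uniformly, where $d_i\colon G_{i,j}\to G_{i-1,j}$. From the presentation $G_{i,j}\to G_{i-1,j}\to\Coker d_i\to 0$ one reads off, after passing to a minimal presentation, that the degree-$0$ shifts of $\Coker d_i$ lie among the $x$-shifts of $G_{i-1,j}$ and its degree-$1$ shifts among those of $G_{i,j}$, hence all among $\{a_{p\ell}\}$. After translating the generators into $\NN^m$, Corollary~\ref{onlythisistrue} yields a bound $\reg\Coker d_i\le c_0$ with $c_0$ depending only on the $x$-shifts of $F_\bullet$, and not on $s$ or $j$.

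Finally I would propagate this bound to the homology through two short exact sequences, using the standard estimate $\reg A\le\max\{\reg B,\reg C+1\}$ for $0\to A\to B\to C\to 0$. The sequence $0\to\Im d_i\to G_{i-1,j}\to\Coker d_i\to 0$ gives $\reg\Im d_i\le\max\{\reg G_{i-1,j},\reg\Coker d_i+1\}$, which is bounded since $G_{i-1,j}$ is free with shifts in $\{a_{i-1,\ell}\}$. Writing $H^s_Q(M)_j=\Ker d_{n-s}/\Im d_{n-s+1}$ and observing that $H^s_Q(M)_j$ embeds into $\Coker d_{n-s+1}$ with quotient $\cong\Im d_{n-s}$, the sequence $0\to H^s_Q(M)_j\to\Coker d_{n-s+1}\to\Im d_{n-s}\to 0$ gives $\reg H^s_Q(M)_j\le\max\{\reg\Coker d_{n-s+1},\reg\Im d_{n-s}+1\}$, bounded by a constant $c$ depending only on the $x$-shifts of $F_\bullet$ and uniform in $s$ and $j$. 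For the matching lower bound $\reg H^s_Q(M)_j\ge -c$ I would simply note that $H^s_Q(M)_j$ is a subquotient of $G_{n-s,j}$, hence vanishes in $x$-degrees below $\min_\ell|a_{n-s,\ell}|$, so the top degree of a minimal generator, and thus the regularity, is bounded below.

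The main obstacle, I expect, is the middle (non-top) homology: unlike $H^n_Q(M)_j$, which is an honest cokernel whose generators and first syzygies can be read off directly, the modules $H^s_Q(M)_j$ with $s<n$ are genuine subquotients, and the naive recursion bounding $\reg\Ker d_i$ in terms of the regularity of the free terms does not close. The device that breaks the circularity is to route the estimate through the cokernels $\Coker d_i$, whose degree-$\le 1$ shifts are manifestly among the $x$-shifts of $F_\bullet$, rather than through the kernels. It is then precisely the multigraded phenomenon of Theorem~\ref{BH} — that in the $\ZZ^m$-graded setting the regularity is governed by the shifts in homological degrees $\le 1$ — which makes the resulting bound depend only on the $x$-shifts of $M$ and be independent of the $y$-shifts (which enter only through the ranks $\mu_{i\ell}(j)$) and of $s$ and $j$.
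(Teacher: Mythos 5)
Your proposal is correct and follows essentially the same route as the paper: apply $H^n_Q(-)_j$ to a bigraded free resolution, observe via \cite[Theorem 1.1]{AR1} that $H^s_Q(M)_j$ is the homology of the resulting complex of free $\ZZ^m$-graded $S_0$-modules whose $x$-shifts lie in a fixed finite set independent of $j$, bound $\reg\Coker\phi_i^*$ uniformly by Corollary~\ref{onlythisistrue} (i.e.\ the Bruns--Herzog multigraded shift theorem), and propagate through short exact sequences. The only cosmetic difference is that you extract the homology as a submodule of $\Coker d_{n-s+1}$ with quotient $\Im d_{n-s}$, whereas the paper presents it as a quotient of $\Ker\phi_{n-s}^*$ by $\Im\phi_{n-s+1}^*$; both yield the same uniform bound.
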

\begin{proof}
By applying a suitable multigraded shift to $M$ we may assume that all generators of $M$ have multidegrees belonging to $\NN^m\times \NN^n$. Then all shifts in the multigraded resolution of $M$ belong to  $\NN^m\times \NN^n$.
Let
\[
\FFF: 0 \rightarrow F_l \overset {\phi_l} \longrightarrow \cdots \longrightarrow F_1 \overset {\phi_1} \longrightarrow F_0 \overset {\phi_0} \longrightarrow M \rightarrow 0,
\]
 be a $\ZZ^m\times \ZZ^n$-graded free resolution of $M$ where $ F_i= \Dirsum_{k=1}^{t_i}S(-a_{ik},-b_{ik})$ for $i=1,\ldots,l$.
Applying the functor $H_{Q}^n(-)_j$ to this resolution yields a graded complex of free $\ZZ^m$-graded modules
\[
H_{Q}^n(\FFF)_j: 0 \rightarrow H_{Q}^n(F_l)_j \overset {\phi_l^*}\longrightarrow \cdots \longrightarrow H_{Q}^n(F_1)_j \overset {\phi_1^*}\longrightarrow H_{Q}^n(F_0 )_j\overset {\phi_0^*} \longrightarrow H_{Q}^n(M)_j \rightarrow 0.
\]
Notice that
\begin{eqnarray}
\label{ayesha}
H_{Q}^n(F_i)_j=\Dirsum_{k=1}^{t_i}\Dirsum_{\left|a\right|=-n-j+|b_{ik}|}S_0(-a_{ik})z^a,
\end{eqnarray}
is a finitely generated free $S_0$-module. It follows that the $\ZZ$-graded modules $H_{Q}^s(M)_j$ are all generated in non-negative degrees. In particular,
$\reg H_{Q}^s(M)_j\geq 0$. Thus it suffice to show that there exists an integer  $c$ such that $\reg H_{Q}^s(M)_j\leq c$.

For each  $i=1, \ldots, l$,  consider the exact sequence
\begin{eqnarray}
\label{ayesha1}
0 \rightarrow \Ker \phi_i^* \rightarrow H_{Q}^n(F_i)_j \overset {\phi_i^*}\longrightarrow H_{Q}^n(F_{i-1})_j \rightarrow N_{i-1, j} \rightarrow 0,
\end{eqnarray}
where $N_{i-1,j}=\Coker  \phi_i^* $. It follows from (\ref{ayesha}) and Corollary~\ref{onlythisistrue} that there exists an integer $c_{i-1}$ such that $\reg N_{i-1, j}\leq c_{i-1}$. Note that the constant $c_{i-1}$ does not depend on $j$.   Hence from (\ref{ayesha1}) one obtains $\reg \Im \phi_i^*= \reg N_{i-1, j}+1$ and $\reg \Ker \phi_i^*=  \reg N_{i-1, j}+2$.
By  \cite[Theorem 1.1]{AR1} we have
\[
H^{s}_Q(M)_j\iso H_{n-s}\big(H_{Q}^n(\FFF)_j\big)\iso \Ker \phi_{n-s}^*/\Im \phi_{n-s+1}^*,
\]
which is an isomorphism of $\ZZ^m$-graded  $S_0$-modules.
Therefore,  the exact sequence $ 0 \rightarrow \Im \phi_{n-s+1}^* \rightarrow \Ker \phi_{n-s}^* \rightarrow  H^{s}_Q(M)_j \rightarrow 0$ yields
\begin{eqnarray*}
\reg  H^{s}_Q(M)_j &\leq& \max\{ \reg \Ker \phi_{n-s}^*, \reg \Im \phi_{n-s+1}^*-1\}\\
                       &=&\max \{ \reg N_{n-s-1, j}+2, \reg N_{n-s, j}\} \\
                     &\leq& \max\{ c_{n-s-1}+2, c_{n-s}\}  \leq c,
\end{eqnarray*}
where $c= \max_i\{c_i\}+2$.
\end{proof}

\begin{Corollary}
Let $I\subseteq S$ be a monomial ideal. Then there exists an integer $c$ such that
\[
|\reg H^i_Q(S/I)_j|\leq c\quad \text{for all $i$ and all $j$}.
\]
\end{Corollary}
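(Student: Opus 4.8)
The plan is to recognize this statement as an immediate special case of Theorem~\ref{againinessen}; the only thing that needs checking is that a quotient by a monomial ideal satisfies the hypotheses of that theorem, after which no further argument is required.

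First I would observe that every monomial ideal $I\subseteq S$ is $\ZZ^m\times\ZZ^n$-graded. Indeed, equip $S$ with the standard multigrading in which $\deg x_i$ is the $i$-th standard basis vector of $\ZZ^m$ (and $0$ in the $y$-direction) and $\deg y_j$ is the $j$-th standard basis vector of $\ZZ^n$; then each monomial $x^ay^b$ is multihomogeneous of multidegree $(a,b)$. Since $I$ is generated by monomials, it is a $\ZZ^m\times\ZZ^n$-graded ideal, and therefore $M=S/I$ is a finitely generated $\ZZ^m\times\ZZ^n$-graded $S$-module. This places $S/I$ squarely within the scope of the multigraded setup of Section~2, where the components $H^i_Q(S/I)_j$ acquire their natural $\ZZ^m$-graded $S_0$-module structure.

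With this in hand I would simply apply Theorem~\ref{againinessen} to $M=S/I$. That theorem yields an integer $c$, depending only on the $x$-shifts of the multigraded resolution of $S/I$, such that $|\reg H^s_Q(S/I)_j|\leq c$ for all $s$ and all $j$. Renaming the cohomological index $s$ as $i$ gives precisely the asserted bound, completing the proof.

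There is essentially no obstacle here: all of the substance lives in Theorem~\ref{againinessen}, and the corollary merely records that the class of modules covered by that theorem contains every quotient $S/I$ by a monomial ideal. The point worth emphasizing, by contrast with the merely bigraded results of Section~1, is that no extra cohomological hypothesis on $S/I$ (such as sequential Cohen--Macaulayness with respect to $Q$) is needed, because the full $\ZZ^m\times\ZZ^n$-graded structure of a monomial quotient is automatically available and feeds directly into the bound of Theorem~\ref{BH}.
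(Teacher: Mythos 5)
Your proposal is correct and matches the paper's intent exactly: the corollary is stated without proof as an immediate consequence of Theorem~\ref{againinessen}, the only point being that a monomial ideal is $\ZZ^m\times\ZZ^n$-graded so that $S/I$ falls under the hypotheses of that theorem. Your explicit verification of the multigradedness of $S/I$ is precisely the (omitted) argument.
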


For the top local cohomology of $K$-algebras with monomial relations  we have the following more precise statement.

\begin{Proposition}
\label{moreprecise}
Let $I=(u_1v_1,\ldots,u_rv_r)$ be a monomial ideal where the $u_i$ are monomials in $K[x_1,\ldots,x_n]$ and the $v_j$ are monomials in $K[y_1,\ldots,y_n]$. We let $J$ be the monomial ideal $K[x_1,\ldots,x_n]$ generated by $u_1,\ldots,u_r$. Then
\[
H^n_Q(S/I)_j\iso (S_0/J)^{\binom{-j-1}{n-1}}.
\]
In particular,  the regularity of $H^n_Q(S/I)_j$ is constant, namely equal to $\reg S_0/J$, for $j\leq -n$.
\end{Proposition}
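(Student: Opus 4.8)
The plan is to compute $H^n_Q(S/I)_j$ directly as the cokernel of the map induced on top local cohomology by a free presentation of $S/I$, in the spirit of the proof of Theorem~\ref{Anihilate}, and then to exploit the fact that each generator $u_iv_i$ has a \emph{single} monomial $v_i$ as its $y$-part. Writing $v_i=y^{\gamma_i}$ with $|\gamma_i|=b_i$ and $a_i=\deg u_i$, the presentation $\Dirsum_{i=1}^r S(-a_i,-b_i)\To S\To S/I\To 0$ yields, upon applying the right exact functor $H^n_Q(-)_j$ and invoking \cite[Theorem 1.1]{AR1}, the $S_0$-presentation
\[
\Dirsum_{i=1}^r H^n_Q(S)(-a_i,-b_i)_j \overset{\phi}\To H^n_Q(S)_j \To H^n_Q(S/I)_j\To 0.
\]
By Formula (1) in \cite{AR1}, $H^n_Q(S)_j=\Dirsum_{|c|=-n-j}S_0 z^c$ is free over $S_0$ on the inverse monomials $z^c$ with $c\in\NN^n$, $|c|=-n-j$, of which there are exactly $\binom{-j-1}{n-1}$; note that each $z^c$ sits in $x$-degree $0$.

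The key simplification is that for the monomial $f_i=u_iy^{\gamma_i}$ the description of $\phi$ from the proof of Theorem~\ref{Anihilate} collapses to a single term: $\phi(e_iz^c)=u_iz^{c-\gamma_i}$ whenever $\gamma_i\leq c$, and $0$ otherwise. I would then observe that as $c$ runs over the source indices with $|c|=-n-j+b_i$ and $c\geq\gamma_i$, the shifted indices $c'=c-\gamma_i$ run over \emph{all} of $\NN^n$ with $|c'|=-n-j$, since the substitution $c'\mapsto c'+\gamma_i$ is a degree-shifting bijection onto $\{c\geq\gamma_i\}$. Consequently, for every target basis vector $z^{c'}$ the image contains $u_iz^{c'}$ for each $i$, so that
\[
\Im\phi=\Dirsum_{|c'|=-n-j}J\,z^{c'},\qquad J=(u_1,\ldots,u_r)S_0.
\]
Passing to the cokernel gives $H^n_Q(S/I)_j=\Dirsum_{|c'|=-n-j}(S_0/J)z^{c'}\iso(S_0/J)^{\binom{-j-1}{n-1}}$, the asserted isomorphism. (One may note in passing that $J$ coincides with the content ideal $c(I)$, since $c(u_iy^{\gamma_i})=(u_i)$.)

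For the regularity statement I would use that the isomorphism above is a direct sum of $\binom{-j-1}{n-1}$ copies of $S_0/J$, all placed in the same internal degree and with no shifts, because each $z^{c'}$ has $x$-degree $0$. Since the regularity of a finite direct sum of copies of a fixed module equals the regularity of that module, we get $\reg H^n_Q(S/I)_j=\reg S_0/J$ as soon as the multiplicity is positive, i.e.\ $\binom{-j-1}{n-1}\geq 1$; this holds precisely when $-j-1\geq n-1$, that is $j\leq -n$.

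The one genuinely delicate point is the verification that $\Im\phi$ fills out the whole of $J$ in every target coordinate $z^{c'}$, with no coordinate "missed." This rests on the elementary but essential observation that, because $v_i$ is a single monomial, the shift $c\mapsto c-\gamma_i$ is a bijection from the relevant source indices onto all target indices of the correct degree; had $v_i$ been a genuine polynomial in the $y_j$, the several terms $z^{c-\beta}$ could overlap and the image would no longer decompose so cleanly. Everything else in the argument is formal bookkeeping with the grading, so I expect this combinatorial identification of $\Im\phi$ to be the crux.
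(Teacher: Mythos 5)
Your argument is correct and follows essentially the same route as the paper: both compute $H^n_Q(S/I)_j$ as the cokernel of the induced map $\phi$ on the free presentation, observe that since each $v_i$ is a single monomial every column of the matrix of $\phi$ has a single nonzero entry $u_i$, and conclude that the image is $J G_0$, giving $G_0/JG_0\iso (S_0/J)^{\binom{-j-1}{n-1}}$. Your explicit verification via the bijection $c'\mapsto c'+\gamma_i$ that every target basis vector $z^{c'}$ receives all of $J$ is exactly the point the paper treats more tersely, so there is nothing to add.
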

\begin{proof}
We set $\deg (u_iv_i)=(a_i, b_i)$ for $i=1, \ldots, r$. Let
 \[
 \cdots \longrightarrow \Dirsum_{i=1}^rS(-a_{i}, -b_{i})  \longrightarrow S \longrightarrow S/I \longrightarrow 0,
 \]
be the free presentation of  $S/I$. Then $H^n_Q(S/I)_j$ has the following $S_0$-presentation
\[
 \cdots \longrightarrow G_1 \overset {\phi} \longrightarrow G_0 \longrightarrow H^n_Q(S/I)_j \longrightarrow 0
 \]
by   free $S_0$-modules, where
\[
G_0= \Dirsum_{\left|c\right|=-n-j}S_0z^c \quad\text{and}\quad G_1=\Dirsum_{i=1}^r \Dirsum_{\left|c\right|=-n-j+b_i}S_0(-a_{i})z^c,
\]
 with $\phi$ as described in the proof Theorem~\ref{Anihilate}: for $G_0$ the basis consists the monomials $z^c$ with  $\left|c\right|=-n-j$ and $G_1$ has a basis consisting of the elements $e_iz^c$ with $\deg e_i=a_i$ and $\left|c\right|=-n-j+b_i$. We have $\phi(e_iz^c)=u_iz^{c-\beta_i}$ if $\beta_i\leq c$ where $v_i=y^{\beta_i}$, and otherwise $0$. It follows from this description that each column of the matrix describing  $\phi$ with respect to this basis has only one non-zero entry and the entries of each of this matrix generates $J$. This implies that $H^n_Q(S/I)_j\iso G_0/JG_0$, as desired.
\end{proof}

\end{document}